\def\titlerunning#1{\gdef\titrun{#1}}
\def\author#1{\gdef\autrun{\def\and{\unskip, }#1}\gdef\@author{#1}}
\def\address#1{{\def\and{\\\hspace*{18pt}}\renewcommand{\thefootnote}{}%
\footnote {#1}}%
\markboth{\autrun}{\titrun}}
\def\email#1{e-mail: #1}
\def\subjclass#1{{\renewcommand{\thefootnote}{}%
\footnote{{Mathematics Subject Classification. } #1}}}
\def\keywords#1{\par\medskip
\noindent\textbf{Keywords.} #1}
\newtheorem{theorem}{Theorem}[section]
\newtheorem{corollary}[theorem]{Corollary}
\newtheorem{lemma}[theorem]{Lemma}
\theoremstyle{definition}
\newtheorem{defin}[theorem]{Definition}
\newtheorem{exa}[theorem]{Example}
\newtheorem*{xrem}{Remark}
\numberwithin{equation}{section}
\let\ep=\varepsilon
\newcommand{\vp}{\varphi}
\begin{document}

\baselineskip=16pt


\titlerunning{Siegel domains over Finsler symmetric cones}

\title{Siegel domains over Finsler symmetric cones}

\author{Cho-Ho Chu}

\date{}

\maketitle

\address{C-H. Chu: School of Mathematical Sciences, Queen Mary, University of London, London
E1 4NS, U.K.; \email{c.chu@qmul.ac.uk}}

\subjclass{58B20, 32M15, 22E65, 17C65, 46B40}


\begin{abstract}
Let $\Omega$ be a proper open cone in a real Banach space $V$. We show that the tube domain
$V \oplus i\Omega$ over $\Omega$ is biholomorphic to a bounded symmetric domain if and only if
$\Omega$ is a normal linearly homogeneous Finsler symmetric cone, which is equivalent to the condition that
$V$ is a unital JB-algebra in an equivalent norm and $\Omega$ is the interior of
$\{v^2: v\in V\}$.
\end{abstract}

\keywords{Siegel domain,  Finsler symmetric cone, bounded symmetric domain, Banach Lie group,
Jordan algebra, Riemannian symmetric space.}

\section{Introduction}\label{sec1}

Let $V \oplus i\Omega$ be a Siegel domain of the first kind over a proper open cone $\Omega$ in a real Banach space $V,$ often called a {\it tube domain}.
If $V$ is finite dimensional, it is well-known from the seminal  works of
Koecher \cite{k} and Vinberg \cite{v} that $V \oplus i\Omega$ is biholomorphic to a bounded
symmetric domain  if and only if $\Omega$ is
a linearly homogeneous self-dual cone,  or equivalently, the closure $\overline\Omega$ is the
 cone $\{a^2: a\in  \mathcal{A}\}$ in  a formally real Jordan algebra $\mathcal{A}$,
 in which case $\Omega$  carries the structure of a Riemannian symmetric space (see also \cite{ash, fk, satake}).
 This result has an infinite
dimensional extension by the work of Braun, Kaup and Upmeier in \cite{bku,ku}, which shows
that $V \oplus i\Omega$ of any dimension is biholomorphic to
a bounded symmetric domain if and only if $\overline\Omega=\{a^2: a\in  \mathcal{A}\}$
in a unital JB-algebra $ \mathcal{A}$. In both cases, $V$ is the underlying vector space of $\mathcal{A}$.
If moreover, $V$ is a Hilbert space, then $\Omega$ is
also a Riemannian symmetric space \cite{book}. However, in contrast to the finite dimensional case, the question of
characterising tube domains  $V \oplus i\Omega$  which are biholomorphic to a bounded symmetric domain
in terms of the geometric structure  of $\Omega$ has been open. The question amounts to extending Koecher and Vinberg's condition of a linearly homogeneous self-dual cone to infinite dimensional Banach spaces.
A fundamental obstacle is that the concept of
a self-dual cone is unavailable in infinite dimensional Banach spaces from want of a positive definite
quadratic form. Nevertheless, using the Finsler structure, we are able to circumvent this difficulty and address
the above question positively.

We show that the tube domain $V \oplus i\Omega$ is biholomorphic to a bounded
symmetric domain  if and only if $\Omega$ is a normal linearly homogeneous Finsler symmetric cone. The latter can be viewed as an
infinite dimensional generalisation of the notion of a linearly homogeneous self-dual cone. Further details are given below.

Let $\Omega$ be an open cone in a real Banach space $V$. Then $\Omega$ is a real  Banach manifold
modelled on $V$.
Let $L(V)$ be
the Banach algebra of bounded linear operators on $V,$ which is a real Banach Lie algebra in the Lie brackets
 $$[S,T] := ST-TS \qquad (S,T\in L(V)).$$
Let $GL(V)$ be the open subgroup of $L(V)$ consisting of invertible elements in $L(V)$.
It is a real Banach Lie group with Lie algebra $L(V)$. The linear maps $g\in GL(V)$ satisfying
$g(\Omega) = \Omega$
 form a subgroup of $GL(V)$ and will be denoted by
 \begin{equation}\label{G}
G(\Omega)=\{g\in GL(V): g(\Omega) = \Omega\}.
\end{equation}
We shall call $G(\Omega)$ the {\it linear automorphism group} of $\Omega$.
An element $g\in GL(V)$ belongs to
$G(\Omega)$ if and only if $g(\overline\Omega) = \overline\Omega$, the latter denotes the closure of $\Omega$.
 Hence $G(\Omega)$ is a closed subgroup of
 $GL(V)$ and can be topologised  to
 a real Banach Lie group with Lie algebra
 \begin{equation}\label{go}
\frak g(\Omega) =\{ X\in L(V): \exp t X \in G(\Omega),  \forall t \in \mathbb{R}\}
\end{equation}
(cf.\,\cite[p.\,387]{up}).

 An open cone $\Omega$  in $V$ can be homogeneous under various group actions.
The terminology  {\it linear homogeneity} throughout the paper is defined below.
\begin{defin} An open cone $\Omega$ in a real Banach space
is called {\it linearly homogeneous} if the linear automorphism
group $G(\Omega)$ acts transitively on $\Omega$,
that is, given $a,b\in \Omega$, there is a continuous linear isomorphism $g\in G(\Omega)$ such that
$g(a)=b$.
\end{defin}

An open cone $\Omega$ in a real Hilbert space  $V$  with an inner product
$\langle \cdot,\cdot\rangle$ is called {\it self-dual} if $\Omega=\Omega^*$, where
$$\Omega^*= \{v\in V: \langle v,x\rangle >0, \forall x\in \overline\Omega \backslash\{0\}\}$$
denotes the dual cone of $\Omega$.

\begin{xrem} Linearly homogeneous self-dual cones are often called {\it symmetric cones} in literature.
In this paper, we adopt the former terminology to avoid the latter being confused with the notion
of {\it symmetric domains}.
\end{xrem}

Recently, the  result of  Koecher \cite{k} and Vinberg \cite{v} has been extended to infinite dimensional
Hilbert spaces in \cite{chu},
where it has been shown that an open cone $\Omega$ in a real Hilbert space  $V$, with inner product $\langle\cdot,\cdot\rangle$,
is a linearly homogeneous self-dual cone if and only if
$V$ carries the structure of a Jordan algebra with identity
and $\overline\Omega=\{x^2:x\in V\}$, in which
the Jordan product satisfies
$$\langle ab,c\rangle = \langle b,ac\rangle \qquad (a,b,c \in V).$$
Such a real Jordan algebra, with or without identity, is called a {\it JH-algebra}.
Together with the  result of \cite{bku} mentioned before,
the above assertion implies that the tube domain $V \oplus i\Omega$ over an open cone $\Omega$ in a Hilbert space
$V$ is biholomorphic to a bounded symmetric domain if and only if $\Omega$ is linearly homogeneous and self-dual.

The question of extending this result to Banach spaces is a natural one although it has been unknown
what should be an appropriate generalisation of the concept of self-duality, which  is unavailable in arbitrary
Banach spaces.
In finite dimensional Euclidean spaces, it has been shown by Shima \cite{shima} and Tsuji \cite{tsuji}
 that if an open cone
$\Omega $ is linearly homogeneous,  and if $\Omega$ is a symmetric space in some
Riemannian metric,
 then it is self-dual and hence $V \oplus i\Omega$ is indeed biholomorphic to
a bounded symmetric domain.

In the absence of Riemannian structures and self-duality in Banach spaces,
we establish an equivalent geometric condition on $\Omega$
for  $V \oplus i\Omega$ to be biholomorphic to a bounded symmetric domain for Banach spaces $V,$
namely, that $\Omega$ be a normal linearly homogeneous Finsler symmetric cone.

\begin{defin}\label{finslm}
By a {\it Finsler symmetric cone}, we mean an open cone
 $\Omega$   in a real Banach space, which is a symmetric Banach manifold in a
{\it compatible $G(\Omega)$-invariant
tangent norm}  (defined in Section \ref{fins}).
\end{defin}
Normal cones are defined in Section \ref{jordanorder}. In finite dimensions,  proper open cones are normal.
Self-dual cones in Hilbert spaces are also normal.
We prove the following main result in Theorem \ref{main}, which resolves the aforementioned question.\\

\noindent {\bf Main Theorem.} 
{\it Let $\Omega$ be a proper open cone in a real Banach space $V.$
The following conditions are equivalent.
\begin{enumerate}[\upshape (i)]
\item The Siegel domain $V \oplus i\Omega$ is biholomorphic to a bounded symmetric domain.
\vspace{-.1in}
\item $\Omega$ is a normal linearly homogeneous Finsler symmetric cone.

\end{enumerate}}

Condition (ii) in this theorem also provides a simple order-geometric characterisation of unital JB-algebras
as it is equivalent to $V$ being a unital JB-algebra in an equivalent norm
and $\Omega$ the interior of  $\{a^2: a\in V\}$.
Hence Finsler symmetric cones abound.
The well-known characterisation of unital JB-algebras
by geometric properties of the state space has been achieved by Alfsen and Schultz in \cite{as},
which is the culmination of a  noncommutative spectral theory developed in a
series of papers \cite{as1,as2, as3}.

To prove the Main Theorem, we first give, in the next two sections,
the definition of symmetric Banach manifolds and JB-algebras, together
with some relevant results on cones and hermitian operators,
which will be used, in tandem with Jordan and Lie theory,
to establish the theorem in the last section.

\section{Symmetric Banach manifolds}\label{fins}

Let $M$ be a Banach manifold (with an analytic structure), modelled on a real or complex Banach space $(V, \|\cdot\|_{_V})$,
with  tangent bundle
$TM = \{(p,v): p\in M, v \in T_p M\}$. A  mapping
$$\nu : TM \longrightarrow [0,\infty)$$ is called a {\it tangent norm} \index{tangent norm}
if $\nu(p, \cdot)$ is a norm on the tangent space $T_p M \approx V$ for each $p\in M$. We call $\nu$
a {\it compatible tangent norm} if it satisfies
the following two conditions.
\begin{enumerate}
\item[(i)] $\nu$ is continuous.
\item[(ii)] For each $p\in M$, there is a local chart $\varphi: \mathcal{U} \rightarrow V$ at $p$, and
constants $0<r<R$ such that
$$ r\|d\varphi_a(v)\|_{_V} \leq \nu(a,v) \leq  R\|d\varphi_a(v)\|_{_V} \qquad (a\in \mathcal{U}\
\subset M, v\in T_a M).$$
\end{enumerate}
The integrated distance $d_\nu$ of the tangent norm $\nu$ on $M$ is given by
$$d_\nu(x,y) = \inf_\gamma \left\{\int_0^1 \nu(\gamma(t), \gamma'(t) )dt: \gamma(0)=x, \gamma(1)=y\right\}$$
where $\gamma :[0,1] \longrightarrow  M$ is a piecewise smooth curve in $M$.

\begin{xrem} In finite dimensions, a compatible tangent norm satisfying certain smoothness and convexity conditions
is known as a {\it Finsler metric} \cite{c}.  Nevertheless,
a Banach manifold with a compatible tangent norm is also
called a {\it Finsler manifold} in literature (e.g.\,\cite{neeb}) and this nomenclature
has been adopted in Definition \ref{finslm}.
\end{xrem}

Given a Banach manifold $M$ with a compatible tangent norm $\nu$,
a bianalytic map $f:M \longrightarrow M$ is called  a {\it $\nu$-isometry}  if it satisfies
\begin{equation*}\label{nuisom}
\nu(f(p), df_p(\cdot)) = \nu(p, \cdot) \quad {\rm for~all} \quad (p, \cdot) \in TM
\end{equation*}
in which case, we have $d_\nu(f(x),f(y)) = d_\nu (x,y)$ for all $x,y\in M$.

\begin{defin}\label{2.2}
Let $\Omega$ be an open cone in a real Banach space $V$, equipped with a tangent  norm $\nu$.
We say that $\nu$ is {\it $G(\Omega)$-invariant} if each $g \in G(\Omega)$ is a $\nu$-isometry.
\end{defin}

\begin{exa}\label{normanifold} A Riemannian manifold $(M,g)$ modelled on a real Hilbert space
$V$, with Riemannian metric $g$, admits a compatible tangent norm
$\nu : TM \longrightarrow [0,\infty)$
defined by
$$\nu(p, v):= g_p(v,v)^{1/2} \qquad  (p\in M, v \in T_pM \approx V).$$
The $\nu$-isometries of $M$ are exactly the isometries of $M$ with respect to the Riemannian metric $g$.
\end{exa}

\begin{exa}\label{cara}Let $D$ be a bounded domain in a complex Banach space $V$.
Then the Carath\'eodory
differential metric, defined below, is a compatible tangent norm on $D$.
$$\mathcal{C}(p,v)= \sup\{|f'(p)(v)|: f\in H(D, \mathbb{D}) \mbox{ and } f(p)=0\} \quad ( (p, v) \in TM)$$
where $H(D,\mathbb{D})$ is the set of all
holomorphic maps from $D$ to $\mathbb{D}=\{z\in \mathbb{C}: |z| <1 \}$. In this case,
all biholomorphic maps on $D$ are $\mathcal{C}$-isometries.
\end{exa}

An open cone $\Omega$ in a  real Banach space $V$ is a real connected  Banach manifold modelled on $V$.
A homogeneous polynomial $p: V \longrightarrow V$ of degree $n$ is of the form
$$p(v) = f(v,\ldots, v) \qquad (v\in V)$$
where  $f: V^{n}\longrightarrow V$ is a continuous $n$-linear map. In particular, each $f\in L(V)$ is a polynomial of
degree $1$, and polynomials of degree $0$ are the constant maps on $V$.

To each homogeneous polynomial
 $p$ on $V$, we associate an analytic vector field $p\frac{\partial}{\partial x}$ on $V$.
If $X= h\frac{\partial}{\partial x}$ is a linear vector field on $\Omega$, that is,
  $h$ is (the restriction of) a continuous linear map $f\in L(V)$, we identify $X$ with $f$.
Conversely, each $f\in L(V)$ identifies with the vector field $f\frac{\partial}{\partial x}$ on $\Omega$.

 Let $I\in L(V)$ be the identity map.
If  $X$ is a linear vector field on $\Omega$,  then evidently $[I,X]=0$. The converse is also true.
We sketch a proof for completeness.
Let $X = h\frac{\partial}{\partial x}$ be an analytic vector field and $[I,X]=0$, and let
$$h(x) = \sum_{n= -1}^\infty p_{_n}(x-e)$$
be the power series expansion of $h$  in a neighbourhood of a point $e\in \Omega$, where
$p_{_n}(v) = f_{_n}(v, \ldots, v)$ is a homogeneous polynomial of
degree $n+1$ with  $f_{_n}:
V^{n+1}\longrightarrow V$, and $p_{_{-1}}=h(e)$.  We have $$X= \sum_{n=-1}^\infty X_n,
\quad X_n =p_{_n}(x-e) \frac{\partial}{\partial x}$$ in a local chart at $e$  and
\begin{equation}
0= [I, X]= \sum_{n=-1}^\infty ({\rm ad}\,I)X_n  = \sum_{n=-1}^\infty
q_n\frac{\partial}{\partial x}.
\end{equation} implies
\begin{equation}\label{0}
\sum_{n=-1}^\infty
q_n(x) =0
\end{equation}
where $q_{_{-1}} = -h(e)$, $q_{_0}(x) = p_{_0}(e)$ and $q_{_1}(x) = f_{_1}(x-e,x) + f_{_1}(x,x-e)-p_{_1}(x-e)$.
This gives $-h(e)+p_{_0}(e)=0$ and
$$h(x) = p_{_0}(x) + p_{_1}(x-e) + \cdots.$$
Differentiating (\ref{0}) twice, we obtain
$$ q_{_1}''(e) = q_{_1}''(e) + q_{_2}''(e) + \cdots = 0 $$
where $q_{_1}''(e)(x) = f_{_1}(x, \cdot) +  f_{_1}( \cdot,x) -  f_{_1}(e, \cdot)-  f_{_1}(\cdot,e) \in L(V)$
for $x\in V$.
It follows that $p_{_1}(x)= f_{_1}(x,x) =0$. Differentiating repeatedly then gives
$p_{_2} = p_{_3} = \cdots = 0$ and $h=p_{_0}$ is linear.

To introduce the  concept of a symmetric Banach manifold, we begin
with the notion of a {\it symmetry} of a manifold. Let $M$ be a Banach manifold endowed with a
compatible tangent norm $\nu$ and let $p\in M$. A {\it $\nu$-symmetry}
(or {\it symmetry}, if $\nu$ is understood) at $p$ is a $\nu$-isometry
$$s: M \longrightarrow M$$ satisfying the following two conditions:
\begin{enumerate}
\item [(i)] $s$ is involutive, that is, $s^2$ is the identity map on $M$,
\item [(ii)] $p$ is an isolated fixed-point of $s$, in other words,
$p$ is the only point in some neighbourhood of $p$ satisfying $s(p)=p$.
\end{enumerate}

\begin{defin} By a {\it symmetric Banach manifold} (with a tangent norm $\nu$),
we mean a {\it connected} Banach manifold $M$, equipped with a compatible tangent norm $\nu$, such that
there is a unique $\nu$-symmetry $s_p: M \longrightarrow M$ at each $p\in M$.
\end{defin}

By definition, a {\it Finsler symmetric cone} $\Omega$ in a real Banach space $V$ is a symmetric Banach manifold
of which the tangent norm is $G(\Omega)$-invariant.

\begin{exa}
Riemannian symmetric spaces are (real) symmetric Banach manifolds
(in the Riemannian metric).  A {\it bounded symmetric domain} is  a bounded domain $D$
in  a complex Banach space such that  for each $p\in D$,
there is an involutive biholomorphic map $s_p: D \longrightarrow D$
(necessarily unique) of which $p$ is an isolated fixed-point.  Equipped with the Carath\'eodory
metric, a bounded symmetric domain is a complex symmetric Banach manifold
and  $s_p$ is the symmetry at $p$.  Finite dimensional Hermitian symmetric spaces of non-compact
type are exactly the bounded symmetric domains in $\mathbb{C}^d$ via the Harish-Chandra realisation
and have been classified by \'E. Cartan \cite{C}.

\end{exa}

\begin{exa}\label{loos} A concept of a symmetric manifold has been introduced by Loos in \cite{loos}
(see also \cite{ber}), where a
connected (real) smooth manifold $M$ is called a {\it symmetric space} if there is a smooth map
$$\mu: (x,y) \in M \times M \mapsto x\cdot y \in M$$
satisfying the following axioms:
\begin{enumerate}
\item[(i)]  $ x\cdot x= x$ ;
\item[(ii)] $x\cdot (x\cdot y) =y$;
\item[(iii)] $x\cdot(y\cdot z) = (x\cdot y)\cdot (x \cdot z)$;
\item[(iv)] there is a neighbourhood $U$ of $x$ such that $x\cdot y =y \in U$ implies $x=y$
\end{enumerate}
for all $x, y, z \in M$.
We will call $(\Omega, \mu)$ a {\it  Loos symmetric space}.
The {\it `left multiplication'} $S(x) : y\in M \mapsto x\cdot y\in M$
 is called a {\it symmetry around } $x$ in \cite{loos}.
A diffeomorphism $f: M \longrightarrow M$  is called a {\it $\mu$-automorphism}
 if $f(x\cdot y) = f(x) \cdot f(y)$.

\begin{lemma}\label{f=g}
Let $f,g: M \longrightarrow M$ be $\mu$-automorphisms on a Loss symmetric space
$(M, \mu)$  such that $f(x)=g(x)$ and $f'(x)=g'(x)$ at some point $x\in M$.
Then we have $f=g$.
\end{lemma}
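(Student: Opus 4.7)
The plan is to show that the set
\[
N := \{y \in M : f(y) = g(y) \text{ and } df_y = dg_y\}
\]
is both open and closed in $M$. Since $x \in N$ by hypothesis and $M$ is connected, this will force $N = M$ and in particular $f = g$. Closedness of $N$ is immediate from continuity of $f$, $g$, and their derivatives, so the content lies in openness.

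To establish openness at $y \in N$, I reduce to a local rigidity statement by forming the composition $\phi := f \circ g^{-1}$. Here $g^{-1}$ is itself a $\mu$-automorphism: substituting $a = g^{-1}(u)$, $b = g^{-1}(v)$ in $g(a \cdot b) = g(a) \cdot g(b)$ yields $g^{-1}(u \cdot v) = g^{-1}(u) \cdot g^{-1}(v)$. Setting $p := f(y) = g(y)$, the chain rule gives $\phi(p) = p$ and $d\phi_p = df_y \circ (dg_y)^{-1} = \mathrm{id}_{T_p M}$. It therefore suffices to prove the following \emph{local rigidity}: any $\mu$-automorphism $\phi$ with $\phi(p) = p$ and $d\phi_p = \mathrm{id}$ coincides with the identity on a neighbourhood of $p$. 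Granting this, $f = g$ on a neighbourhood of $y$, and differentiating shows $df = dg$ there as well, exhibiting a neighbourhood of $y$ inside $N$.

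For the local rigidity, the input is the infinitesimal structure forced by the Loos axioms. Differentiating axiom (ii) at $p$ gives $(dS(p)|_p)^2 = \mathrm{id}$; axiom (iv) excludes $+1$ from the spectrum of $dS(p)|_p$, since otherwise the implicit function theorem would produce a positive-dimensional fixed-point set through $p$, contradicting isolation. Hence $dS(p)|_p = -\mathrm{id}$, and differentiating axiom (i) along the diagonal then yields $\partial_1 \mu(p,p) = 2\,\mathrm{id}$ and $\partial_2 \mu(p,p) = -\mathrm{id}$. These identities allow one to build, through the Loos geodesics $\gamma(s) \cdot \gamma(t) = \gamma(2s - t)$ with prescribed $(\gamma(0), \gamma'(0)) = (p, v)$, a local exponential map $\mathrm{Exp}_p : U \subset T_p M \to M$ which is a diffeomorphism of a neighbourhood of $0$ onto a neighbourhood of $p$. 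Because $\phi$ sends geodesics to geodesics, $\mathrm{Exp}$ is natural in the sense
\[
\phi \circ \mathrm{Exp}_p = \mathrm{Exp}_{\phi(p)} \circ d\phi_p.
\]
Substituting $\phi(p) = p$ and $d\phi_p = \mathrm{id}$ collapses the right-hand side to $\mathrm{Exp}_p$, so $\phi$ fixes the whole image of $\mathrm{Exp}_p$, finishing the proof of local rigidity.

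The main technical obstacle is the construction of the exponential map and the verification of its naturality in the Banach-manifold setting, as opposed to the classical finite-dimensional case; this rests on the Banach-space Picard--Lindel\"of theorem applied to the smooth ODE extracted from $\gamma(s) \cdot \gamma(t) = \gamma(2s - t)$, and is standard in Loos's framework (cf.\,\cite{loos}).
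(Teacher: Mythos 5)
Your argument is correct and is in substance the same as the paper's: the paper disposes of the lemma by citing \cite[Lemma 3.5, Theorem 3.6]{neeb}, and those results are established by precisely the open--closed plus exponential-naturality argument you write out (a connection-preserving map of a connected Banach manifold with spray is determined by its value and differential at one point). The only soft spot is your derivation of $dS(p)|_p=-\mathrm{id}$: the implicit function theorem does not apply directly to $y\mapsto S(p)(y)-y$ when $dS(p)|_p-\mathrm{id}$ fails to be invertible, so one should instead linearize the involution $S(p)$ via the averaged chart $\tfrac12\left(\varphi + A\circ\varphi\circ S(p)\right)$ and read off the fixed-point set as $\ker(A-\mathrm{id})$ --- or simply quote \cite[Lemma 3.2]{neeb}, as the paper itself does in Section \ref{fins}.
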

\begin{proof} This follows from \cite[Lemma 3.5, Theorem 3.6]{neeb}
since $M$ is a connected manifold with spray.
\end{proof}

Given a (real) symmetric Banach manifold $M$, one can define $\mu: M \times M \longrightarrow M$ by
$$\mu(x,y) = s_x(y) \quad (s_x \mbox{ is the symmetry at } x)$$
which makes $(M,\mu)$ into a Loos symmetric space and $s_x = S(x)$.
\end{exa}

A Loos symmetric space $(M,\mu)$ is equipped with a canonical affine connection \cite[Theorem 26.3]{ber},
which is geodesically complete \cite[Theorem 3.6]{neeb}.  The derivative $$S(p)'(p) : T_pM \longrightarrow T_pM$$
of the symmetry $S(p)$ equals $-id$, where $id$ is the identity map \cite[Lemma 3.2]{neeb}.
Given a geodesic $\gamma : \mathbb{R}\longrightarrow M$
through $p$ with $\gamma(0)=p$, the symmetry $S(p)$ reverses $\gamma$ in that $S(p)(\gamma(t)) = \gamma (-t)$.

\section{Jordan algebras and order structures}\label{jordanorder}

For later applications, we review some basics of Jordan algebras, first introduced in
\cite{jvw},  and refer
to \cite{book,up} for more details. We also prove some relevant order-theoretical results in this section.
In what follows, a Jordan algebra $\mathcal{A}$ is a real vector
space, which can be infinite dimensional, equipped with a bilinear product $(a,b) \in \mathcal{A} \times \mathcal{A} \mapsto ab \in \mathcal{A}$
that is commutative, but not necessarily associative, and satisfies the {\it Jordan identity}
$$a(ba^2) = (ab)a^2 \qquad (a,b \in \mathcal{A}).$$

A vector space $\mathcal{A}$ equipped with a bilinear product will be called an {\it algebra}.
For each element $a$ in an algebra $\mathcal{A}$, we define inductively
$$a^1 =a, a^{n+1} = aa^n \qquad (n =1, 2, \ldots)$$ and call $\mathcal{A}$ {\it power associative} if
$$a^m a^n = a^{m+n} \qquad (m, n= 1,2, \ldots ).$$
We call $\mathcal{A}$ {\it unital} if it contains an identity. Evidently, if $\mathcal{A}$ is unital and power
associative, then the subalgebra  $\mathcal{J}(a,e)$ in $\mathcal{A}$ generated by $a$ and the identity $e$
is associative.

A linear map $\delta: V \longrightarrow V$ on an algebra $V$ is called a {\it derivation} if it satisfies
$$\delta(ab) = \delta(a)b + a\delta(b) \qquad (a,b\in V)$$
which can be rephrased as
\begin{equation}\label{derivation}
[\delta, L_a] = L_{\delta(a)} \qquad (a\in V)
\end{equation}
where $L_a: V \longrightarrow V$ is the {\it left multiplication}
$L_a(x)=ax$ for $x\in V$, and $[\delta, L_a] = \delta L_a - L_a \delta$
is the usual {\it commutator}. Given a derivation $\delta$ on $V$ and $a\in V$,
a simple induction shows that
\begin{equation}\label{square}
\delta (a) =0 \Rightarrow \delta(a^n)=0  \qquad (n=2,3,\ldots).
\end{equation}
Further, if $V$ is commutative, then $\delta(a^2) =0$ implies
\begin{equation}\label{square2}
 2a\delta (a) = \delta(a^2)=0.
\end{equation}

We will make use of the following result, which follows from
\cite[Lemma 2.4.4]{palacios}.
\begin{lemma}\label{244} Let $V$ be a commutative algebra on which
the commutator $[L_x, L_y]$ is a derivation for all
$x,y\in V$. Then for all $a\in V$,  we have
\begin{enumerate}
\item[\rm (i)] $[L_a, L_{a^3}] = 3L_a[L_a,L_{a^2}];$
\item[\rm (ii)] $[[L_a, L_{a^2}], [[L_a, L_{a^2}], L_{a^2}]] =0.$
\end{enumerate}
\end{lemma}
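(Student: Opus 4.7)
The proof is entirely operator-theoretic, and uses three tools repeatedly: equation (\ref{derivation}), which gives $[\delta, L_z] = L_{\delta(z)}$ for any derivation $\delta$; the commutator Leibniz rule $[AB, C] = A[B, C] + [A, C]B$; and equation (\ref{square2}), which in the commutative setting yields $\delta(u^2) = 2u\,\delta(u)$ for any derivation $\delta$. Throughout, write $D = [L_a, L_{a^2}]$, which is a derivation by hypothesis.

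The cornerstone identity is obtained by applying (\ref{derivation}) to $D$ at $z=a$, namely $[D, L_a] = L_{D(a)}$. Combining this with the Leibniz expansion produces the auxiliary operator identity
\[
[L_a^2, L_{a^2}] \;=\; L_a D + D L_a \;=\; 2\,L_a D + L_{D(a)}.
\]
For part (i), I would apply (\ref{derivation}) to the derivation $[L_a, L_{a^3}]$ (a derivation by hypothesis) at $z=a$, and combine with the element-level expansion
\[
D_{x,y}(a^3) \;=\; D_{x,y}(a)\cdot a^2 + a\cdot D_{x,y}(a^2) \;=\; (L_{a^2} + 2 L_a^2)\,D_{x,y}(a),
\]
valid for any derivation $D_{x,y}=[L_x, L_y]$ by (\ref{square2}). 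Matching the resulting operator equations against the auxiliary identity above should extract (i) after standard bracket manipulation.

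For part (ii), two successive applications of (\ref{derivation}) give the clean reduction
\[
[D, [D, L_{a^2}]] \;=\; [D, L_{D(a^2)}] \;=\; L_{D^2(a^2)}.
\]
A useful byproduct of (i) is the identity $L_a L_{D(a)} = L_{a D(a)}$: apply (\ref{derivation}) to the derivation $L_a D = \tfrac{1}{3}[L_a, L_{a^3}]$ at $z=a$, and simplify using $D L_a = L_a D + L_{D(a)}$. This lets me rewrite $[D, L_{a^2}] = 2\,L_{a D(a)} = 2\,L_a L_{D(a)}$, and the Leibniz rule together with (\ref{derivation}) then produces
\[
[D, [D, L_{a^2}]] \;=\; 2\bigl(L_{D(a)}^2 + L_a L_{D^2(a)}\bigr).
\]
It remains to show that this operator is zero.

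The main obstacle is precisely this final vanishing step. The mechanical reductions above are unproblematic, but forcing $L_{D(a)}^2 + L_a L_{D^2(a)} = 0$ requires additional relations among $L_a$, $L_{D(a)}$, and $L_{D^2(a)}$. I would obtain these by further invocations of the derivation hypothesis---for instance, applying (\ref{derivation}) to the derivation $L_a D$ at $z = D(a)$ to obtain $[L_a D, L_{D(a)}] = L_{a D^2(a)}$, and combining with the commutation identity for $L_a$ and $L_{D(a)}$ already derived. A careful sign-chase, using the Jacobi identity to reassemble the resulting expressions, should then yield the required cancellation and complete the proof.
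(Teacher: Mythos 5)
Your reduction for (ii) starts exactly as the paper's does: writing $D=[L_a,L_{a^2}]$ and applying (\ref{derivation}) twice gives $[D,[D,L_{a^2}]]=[D,L_{D(a^2)}]=L_{D^2(a^2)}$, so everything hinges on the single element identity $D^2(a^2)=0$. But that identity is precisely what you do not prove: you expand further to $2\bigl(L_{D(a)}^2+L_aL_{D^2(a)}\bigr)$, concede that forcing this to vanish is ``the main obstacle,'' and close with ``a careful sign-chase \dots should then yield the required cancellation.'' That is the whole content of the lemma left unestablished, and the auxiliary relations you propose (e.g.\ $[L_aD,L_{D(a)}]=L_{aD^2(a)}$) do not visibly produce the cancellation. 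The same criticism applies to (i): ``matching the resulting operator equations \dots should extract (i) after standard bracket manipulation'' is a plan, not an argument. The paper handles both points by citation to Cabrera Garc\'{\i}a--Rodr\'{\i}guez Palacios: (i) is their Lemma 2.4.5, and the vanishing $D^2(a^2)=0$ is the ``simple argument'' of their Lemma 2.4.4, which uses (i).

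For the record, the missing step closes as follows, and your expansion is a detour from it. By (i), $L_aD=\tfrac13[L_a,L_{a^3}]$ is a derivation, so (\ref{derivation}) gives $[L_aD,L_{a^2}]=L_{(L_aD)(a^2)}=L_{aD(a^2)}$; on the other hand the Leibniz rule for commutators gives $[L_aD,L_{a^2}]=L_a[D,L_{a^2}]+[L_a,L_{a^2}]D=L_aL_{D(a^2)}+D^2$. Evaluating both expressions at $a^2$ and using commutativity,
\begin{equation*}
D^2(a^2)=\bigl(aD(a^2)\bigr)a^2-a\bigl(D(a^2)a^2\bigr)=\bigl(L_{a^2}L_a-L_aL_{a^2}\bigr)\bigl(D(a^2)\bigr)=-D^2(a^2),
\end{equation*}
whence $D^2(a^2)=0$ and $[D,[D,L_{a^2}]]=L_{D^2(a^2)}=0$. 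Note that one only needs the element identity $D^2(a^2)=0$, not the operator identity $L_{D(a)}^2+L_aL_{D^2(a)}=0$ that your extra expansion demands. You should either supply an argument of this kind or cite the source, for both (i) and the vanishing step; as written the proposal proves neither.
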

\begin{proof} (i) This is proved in \cite[Lemma 2.4.5]{palacios}.
(ii) Using (i), a simple argument in \cite[Lemma 2.4.4]{palacios} gives $[L_a, L_{a^2}]^2(a^2)=0$.
Applying (\ref{derivation})  twice yields
$$\quad [[L_a, L_{a^2}], [[L_a, L_{a^2}], L_{a^2}]] = [[L_a, L_{a^2}], L_{[L_a, L_{a^2}](a^2)}]
= L_{[L_a, L_{a^2}][L_a, L_{a^2}](a^2)}=0. \quad\qedhere$$
\end{proof}

Jordan algebras are power associative.
An element $a$ in a Jordan algebra $ \mathcal{A}$ with identity $e$ is called {\it invertible} if there exists an
element  $a^{-1}\in \mathcal{A}$ (which is necessarily unique)  such that
$aa^{-1}=e$ and $(a^2)a^{-1}=a$.
A Jordan algebra $\mathcal{A}$ is called {\it formally real} if $a_1^2 + \cdots +a_n^2 =0$ implies
$a_1 = \cdots =a_n =0$ for any $a_1, \ldots, a_n \in \mathcal{A}$ \cite{jvw}.
 A finite dimensional formally real Jordan algebra $\mathcal{A}$ is
necessarily unital (cf.\,\cite[Proposition 1.1.13]{chu}).

On a Jordan algebra $\mathcal{A}$, one can define a {\it Jordan triple product} by
$$\{a,b,c\} = (ab)c + a(bc) - b(ac) \qquad (a,b,c\in \mathcal{A})$$
which plays an important role in the structures of $\mathcal{A}$.

A real Jordan algebra $\mathcal A$ is called a {\it JB-algebra}
 if it is also a Banach space and the norm
satisfies
$$ \|a b\| \leq \|a\| \|b\|, \quad \|a^2\| =  \|a\|^2, \quad
\|a^2\| \leq \|a^2 + b^2\|$$ for all $a, b \in \mathcal A$.
A JB-algebra $\mathcal{A}$ admits a natural order structure determined by the set
$$\mathcal{A}_+ =\{x^2: x\in \mathcal{A}\}$$
which  forms a closed cone \cite[Lemma 3.3.5, Lemma 3.3.7]{stormer}
and satisfies $\mathcal{A}_+ \cap - \mathcal{A}_+ =\{0\}$. In finite dimensions,
JB-algebras are exactly the formally real Jordan algebras \cite[Lemma 2.3.7]{book}.

Let $V$ be a real Banach space.
By a {\it cone} $\Omega$ in $V$, we mean a  {\it nonempty} subset of $V$ satisfying
(i) $\Omega + \Omega \subset \Omega$ and (ii) $\alpha \Omega \subset \Omega$ for all $\alpha >0$. We note that
a cone is necessarily convex.
Trivially, $V$ itself is a cone. In the sequel, we shall exclude this case.
If  $\Omega$  is an open cone properly contained in $V$, then we must have $0\notin \Omega$ although the closure $\overline \Omega$
contains $0$.

Let $\Omega$ be an open cone properly contained in a real Banach space $V$ with norm $\|\cdot\|$, and let $\leq$ be
the partial order defined by the closure $\overline \Omega$, which is a cone,  so that
$$x\leq y \Leftrightarrow y-x \in \overline\Omega.$$
We also write $y \geq x$ for $x \leq y$.
Let $V^*$ be the dual Banach space of $V$, consisting of continuous linear functionals on $V$.
As usual,  a linear functional $f : V \longrightarrow \mathbb{R}$ is called {\it positive}
 if  $f(\overline \Omega) \subset [0, \infty)$.
By the Hahn-Banach separation theorem,  we have
$$\overline\Omega = \{v\in V: f(v) \geq 0 \mbox{ for each  $f \in V^*$
satisfying $f(\overline \Omega) \subset [0, \infty)$} \}.$$
We note that each element $e\in \Omega$ is an {\it order unit}, that is, for each $v\in V$,  we have
$$-\lambda v \leq v \leq \lambda e$$
for some $\lambda >0$. Indeed, since $\Omega$ is open, $ e-\Omega$  is a neighbourhood of $0\in V$
and therefore one can find $\lambda >0$ such that $\pm \lambda v \in e- \Omega$, which gives
$ \lambda v = e - a_1 $  and $ -\lambda v = e-a_2$ for some $a_1, a_2 \in \Omega$. In other words,
$$ - \frac{1}{\lambda} e \leq v \leq \frac{1}{\lambda} e.$$
The preceding argument also implies
\begin{equation}\label{gen}
V= \Omega -\Omega.
\end{equation}
 An  order unit $e\in \Omega$ induces a semi-norm $\|\cdot\|_e$ on $V$,
 defined by
 \[ \|x\|_e = \inf\{\lambda>0: -\lambda e \leq x \leq \lambda e\} \qquad (x\in V)\]
which satisfies
\begin{equation}\label{eq}
-\|x\|_e e\leq x \leq \|x\|_e e
\end{equation}  and
\begin{equation}\label{ball}
\{x\in V: \|x\|_e \leq 1\} = \{x\in V: -e \leq x\leq e\}.
\end{equation}
Since $\{x\in V: \|x\|_e =0\} = \overline \Omega \cap -\overline\Omega$, the semi-norm $\|\cdot\|_e$ is a norm
if and only if  $$ \overline \Omega \cap -\overline\Omega=\{0\}$$
 in which case $\Omega$ is called a {\it proper cone}
and $\|\cdot\|_e$ is
called the {\it order-unit norm} induced by $e$. All order-unit norms induced by elements in $\Omega$ are mutually equivalent.

Henceforth, let $\Omega$ be a proper open cone in $V$.
It follows from (\ref{ball})  that every linear map $\psi: V \longrightarrow V$ which is {\it positive}, meaning $\psi(\overline \Omega)
\subset \overline \Omega$, is continuous with respect to the order-unit norm $\|\cdot\|_e$ and moreover,
$\|\psi\|_e = \|\psi(e)\|_e$, where the former denotes the norm of $\psi$ with respect to $\|\cdot\|_e$.
In particular, if $\psi:V \longrightarrow \mathbb{R}$ is a positive linear functional, then $\|\psi\|_e = \psi(e)$.

Let  $(V, \|\cdot\|_e)$  denote the vector space $V$ equipped with the order-unit norm $\|\cdot\|_e$, and
$(V, \|\cdot\|_e)^*$ its dual space. A positive linear map $\psi : (V, \|\cdot\|_e) \longrightarrow (V, \|\cdot\|_e)$
is an isometry
if and only if $\psi(e)=e$ \cite[Proposition 2.3]{chu}.
By \cite[Lemma 2.5]{chu}, there is a positive constant $c>0$ such that
\begin{equation}\label{c}
\|\cdot\|_e \leq c \|\cdot\|.
\end{equation} It follows that every $\|\cdot\|_e$-continuous linear functional on $V$
is also $\|\cdot\|$-continuous. On the other hand, given $f\in V^*$ satisfying $f(e)=1=\|f\|_e$,
then $f$ is positive and hence continuous with
respect to the norm $\|\cdot\|_e$.

Denote the {\it state space} (with respect to the order unit $e$) by
\begin{equation}\label{s}
S_e= \{f\in (V^, \|\cdot\|_e)^*: f(e)= 1=\|f\|_e\} =  \{f\in V^*: f(e)= 1, f \mbox{ is positive} \}
\end{equation}
which is a weak* compact convex set in the dual $V^*$ and  we have
$$\|x\|_e = \sup \{|f(v)|: f \in S_e\} \qquad (x\in V)$$
 (cf.\,\cite[Lemma 1.2.5]{stormer}).

\begin{lemma}\label{1}
Let $\Omega$ be a proper open cone in a real Banach space $V$ and let $e\in \Omega$,
which induces an order-unit norm $\|\cdot\|_e$ on $V$.
Then we have
$$\Omega = \bigcap_{f\in S_e} f^{-1}(0, \infty).$$
\end{lemma}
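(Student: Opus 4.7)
The plan is to prove the two inclusions separately, using the fact that $S_e$ is weak* compact and that elements of $\Omega$ are order units relative to $e$.

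For the inclusion $\Omega \subset \bigcap_{f\in S_e} f^{-1}(0,\infty)$, let $x\in \Omega$. Since $\Omega$ is open, the continuous curve $t\mapsto x-te$ lies in $\Omega$ for all sufficiently small $t>0$. Choose such a $t>0$; then $x-te\in \Omega\subset \overline\Omega$, which reads $te\leq x$. Applying any $f\in S_e$, which is positive with $f(e)=1$, gives $f(x)\geq t>0$.

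For the reverse inclusion, suppose $f(x)>0$ for every $f\in S_e$. Since $S_e$ is weak* compact in $V^*$ and $f\mapsto f(x)$ is weak* continuous, the infimum is attained, so there exists $\delta>0$ with $f(x)\geq \delta=\delta f(e)$ for every $f\in S_e$, equivalently $f(x-\delta e)\geq 0$. The aim is now to promote this to all positive $g\in V^*$. Any nonzero positive $g\in V^*$ has $g(e)>0$: indeed, from $-\|v\|_e e\leq v\leq \|v\|_e e$ and positivity, $g(e)=0$ would force $g\equiv 0$. Hence $g/g(e)\in S_e$, giving $g(x-\delta e)\geq 0$ for every positive $g\in V^*$. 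The Hahn-Banach separation characterization $\overline\Omega=\{v:f(v)\geq 0 \text{ for all positive } f\in V^*\}$ recalled earlier then yields $x-\delta e\in \overline\Omega$.

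It remains to upgrade $x=\delta e+(x-\delta e)\in \Omega+\overline\Omega$ to $x\in \Omega$; this is the step that requires a small argument. The claim is $\Omega+\overline\Omega\subset \Omega$. Given $a\in\Omega$ and $b\in\overline\Omega$, pick $r>0$ with $B(a,r)\subset \Omega$ and $b_n\in\Omega$ with $b_n\to b$. Since $\Omega+\Omega\subset\Omega$, we have $B(a+b_n,r)=B(a,r)+b_n\subset \Omega$ for each $n$. For any $v$ with $\|v\|<r$, choose $\varepsilon>0$ so that $\|v\|<r-\varepsilon$, then pick $n$ large enough that $\|b-b_n\|<\varepsilon$; then $v+(b-b_n)$ has norm less than $r$, so $a+b+v=a+b_n+(v+(b-b_n))\in \Omega$. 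Hence $B(a+b,r)\subset \Omega$, so $a+b\in \Omega$. Applied with $a=\delta e$ and $b=x-\delta e$, this shows $x\in\Omega$. I expect the main obstacle to be nothing deep but this last openness argument, which needs the fact that $\Omega$ is a \emph{cone} (so $\Omega+\Omega\subset \Omega$) together with openness.
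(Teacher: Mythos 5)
Your proof is correct and follows essentially the same route as the paper: the first inclusion via the order relation $te\le x$ (the paper phrases it as $a$ being an order unit), and the reverse inclusion via weak* compactness of $S_e$ to extract a uniform $\delta>0$ with $f(x)\ge\delta$ for all $f\in S_e$. The only difference is the endgame: the paper shows a norm ball around $x$ lies in $\overline\Omega$ and invokes $\Omega=\overline\Omega^{\,0}$ for the open convex cone, whereas you decompose $x=\delta e+(x-\delta e)$ and verify $\Omega+\overline\Omega\subset\Omega$ directly -- both are valid and of comparable length.
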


\begin{proof}
Given that $V$ is partially ordered by the closure $\overline \Omega$,  we have
\begin{equation}\label{po}
\overline \Omega = \bigcap_{f\in S_e} f^{-1}[0, \infty)
\end{equation}
since $f/f(e) \in S_e$ for each nonzero positive linear functional $f \in V^*$.

Let $a\in \Omega$. Then for each $f\in S_e$, we have
 $f(a)>0$ since $a$ is an order unit, which implies $ e \leq \lambda a$ for some constant $\lambda >0$ and hence
$1\leq \lambda f(a)$. This proves
$$ \Omega \subset  \bigcap_{f\in S_e} f^{-1}(0, \infty).$$

Conversely, let $a\in V$ and  $f(a)>0$ for all $f\in S_e$. Then $a \in \overline\Omega$ and  by weak* compactness of $S_e$,
one can find some $\delta >0$ such that $f(a) \geq \delta$ for all  $f\in S_e$. Let
 $$N=\left\{x\in V: \|x-a\| < \frac{\delta}{2c}\right\} \subset \left\{x\in V: \|x-a\|_e < \frac{\delta}{2}\right\} $$
where $c>0$ is given in (\ref{c}). Then $N$ is an open neighbourhood of $a$ and, $N \subset \overline \Omega$ since
$$ x\in N \Rightarrow -\frac{\delta}{2} e\leq x-a \Rightarrow a- \frac{\delta}{2}e \leq x \Rightarrow \frac{\delta}{2}\leq f(x)$$
for all $f\in S_e$. Hence $a$ belongs to the interior  $\overline \Omega^0$ of
$\overline\Omega$ and, as $\Omega$ is open and convex, we have $\Omega= \overline \Omega^0$ and $a\in \Omega$.
\end{proof}

We see from (\ref{c}) that if $\dim V<\infty$, then the order-unit norm $\|\cdot\|_e$ is equivalent to
the norm of $V$ by the open mapping theorem. In fact, the equivalence of the two norms is related to
the basic concept of a normal cone in the theory of partially ordered topological vector spaces.

\begin{lemma}\label{norm} Let $\Omega$ be a proper open cone in a real Banach space $V$ with norm $\|\cdot\|$.
Then the order-unit norm $\|\cdot\|_e$ induced by $e\in \Omega$ is equivalent to $\|\cdot\|$ if and only if
$\Omega$ is a normal cone in $V$, that is, there is a constant $\gamma >0$ such that $0\leq x \leq y$
implies $\|x\|\leq \gamma\|y\|$ for all $x,y \in V$.
In particular, $(V, \|\cdot\|_e)$ is a Banach space if $\Omega$ is a normal cone.
\end{lemma}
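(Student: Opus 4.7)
The plan is to prove each implication separately and then deduce the Banach-space assertion as a corollary of norm equivalence.

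For the \emph{only if} direction, I would first observe that the order-unit norm is automatically monotone on the positive part of $V$: if $0 \leq x \leq y$, then $x \leq y \leq \|y\|_e e$ and $-\|y\|_e e \leq 0 \leq x$, so by the very definition of $\|\cdot\|_e$ we get $\|x\|_e \leq \|y\|_e$. If $\|\cdot\|_e$ and $\|\cdot\|$ are equivalent, say $\|\cdot\| \leq C\|\cdot\|_e$ and $\|\cdot\|_e \leq c\|\cdot\|$ (the latter is (\ref{c})), then $\|x\| \leq C\|x\|_e \leq C\|y\|_e \leq Cc\|y\|$, giving normality with $\gamma = Cc$.

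For the \emph{if} direction, the nontrivial task is to bound $\|\cdot\|$ by a multiple of $\|\cdot\|_e$, since (\ref{c}) already provides the reverse inequality. The key trick is the standard shift $x \mapsto x + \|x\|_e e$: the defining inequality (\ref{eq}) gives $-\|x\|_e e \leq x \leq \|x\|_e e$, which rearranges to
\[
0 \;\leq\; x + \|x\|_e\, e \;\leq\; 2\|x\|_e\, e .
\]
Applying the normality constant $\gamma$ to this sandwich yields
\[
\|x + \|x\|_e e\| \;\leq\; \gamma\, \|\,2\|x\|_e e\,\| \;=\; 2\gamma\|e\|\,\|x\|_e ,
\]
and then the triangle inequality produces $\|x\| \leq (2\gamma+1)\|e\|\,\|x\|_e$, which is the required estimate.

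Finally, for the parenthetical Banach-space claim, once $\|\cdot\|_e$ and $\|\cdot\|$ are equivalent, Cauchy sequences (and their limits) in the two norms coincide, so the completeness of $(V,\|\cdot\|)$ transfers to $(V,\|\cdot\|_e)$. I do not anticipate any serious obstacle here; the argument is essentially a one-line order-theoretic manipulation together with the prior estimate (\ref{c}). The only mild subtlety is to remember that (\ref{c}) gives one of the two inequalities \emph{for free}, so normality is only needed to supply the other direction.
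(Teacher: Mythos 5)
Your proposal is correct and follows essentially the same route as the paper: monotonicity of $\|\cdot\|_e$ on the order interval for the forward direction, and the shift $x\mapsto x+\|x\|_e e$ combined with normality and the triangle inequality for the converse (the paper phrases this on the unit ball $-e\leq x\leq e$, which is only a cosmetic difference). The completeness remark is handled the same way, as an immediate consequence of norm equivalence.
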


\begin{proof} By the definition of the order-unit norm $\|\cdot\|_e$, we have $0\leq x\leq y$ in $V$
implies $\|x\|_e \leq \|y\|_e$. Hence $\Omega$ is normal in $(V,\|\cdot\|_e)$. If $\|\cdot\|$ is equivalent to $\|\cdot\|_e$,
then evidently $\Omega$ is also normal in $(V,\|\cdot\|)$.

Conversely, let $\Omega$ be normal in $(V, \|\cdot\|)$.
We have already noted in (\ref{c}) that $\|\cdot\|_e \leq c \|\cdot\|$ for some constant $c>0$.
 By (\ref{ball}) and normality of $\Omega$,
there is a constant $\gamma>0$ such that
$$\|x\|_e \leq 1 \Leftrightarrow -e\leq x\leq e \Rightarrow 0\leq x+e\leq 2e \Rightarrow \|x+e\| \leq 2\gamma\|e\|
\Rightarrow \|x\| < 2(\gamma +1)\|e\|$$
which implies $\|\cdot\| \leq 2(\gamma +1)\|e\| \|\cdot\|_e$ and the equivalence of $\|\cdot\|$ and $\|\cdot\|_e$.
\end{proof}

We note that a self-dual cone $\Omega$ in a Hilbert space $H$ is a proper cone,
and also normal since it has been shown in
\cite[Lemma 2.6]{chu} that the order-unit norms induced by elements in $\Omega$ are all equivalent to the norm
of $H$.

Let  $L(W)$ be  the Banach algebra
of bounded linear operators on a complex Banach space $W$ and $I\in L(W)$ the identity operator.
We recall that an element $T\in L(W)$ is called {\it hermitian} if its numerical range
$\textsf{V}(T)$ is contained in $\mathbb{R}$, where
$$\textsf{V}(T) =\{\psi(T): \psi \in L(W)^* \mbox{ satisfies } \|\psi\|=1=\psi(I)\},$$
which is equivalent to
$$\|\exp itT\| = \|I + itT + (itT)^2/2! + \cdots \| =1 \qquad (t\in \mathbb{R})$$
(cf.\,\cite[Chapter 2]{bd}).  If $T_0\in L(W)$ is hermitian, then the left multiplication
$$L_{T_0} :S\in L(W) \mapsto T_0S\in L(W)$$ is a hermitian operator in L(L(W)) because
the linear map $T\in L(W) \mapsto L_T\in L(L(W))$ is an isometry.

\begin{lemma}\label{95} Let $\eta : L(W) \longrightarrow L(W)$ be a hermitian operator.
Then for all $T\in L(W)$, we have $\|\eta(T)\|^2 \leq 4 \|T\|\|\eta^2(T)\|$.
\end{lemma}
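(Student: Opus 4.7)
The plan is to exploit the hermitian hypothesis through the defining identity $\|\exp(it\eta)\| = 1$ for all $t \in \mathbb{R}$, then produce a one-parameter family of inequalities and optimize.

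First I would fix $T \in L(W)$ and set $U(t) = \exp(it\eta) \in L(L(W))$, which by hermiticity is an isometry for every real $t$. Since $\eta$ commutes with its own exponential series, I have $\frac{d}{dt}U(t)(T) = i\eta(U(t)(T)) = iU(t)(\eta(T))$, whence
\begin{equation*}
U(t)(T) - T \;=\; i\int_0^t U(s)(\eta(T))\,ds
\qquad\text{and}\qquad
U(s)(\eta(T)) - \eta(T) \;=\; i\int_0^s U(u)(\eta^2(T))\,du.
\end{equation*}
The left-hand side of the first identity has norm at most $2\|T\|$ (by the triangle inequality and $\|U(t)\| = 1$), while the integrand inside the second identity has norm at most $\|\eta^2(T)\|$. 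Substituting and combining,
\begin{equation*}
t\,\eta(T) \;=\; -i\bigl(U(t)(T) - T\bigr) + \int_0^t\!\!\int_0^s U(u)(\eta^2(T))\,du\,ds,
\end{equation*}
and taking norms on both sides yields, for every $t>0$,
\begin{equation*}
t\,\|\eta(T)\| \;\leq\; 2\|T\| + \tfrac{t^{2}}{2}\,\|\eta^{2}(T)\|.
\end{equation*}

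Next I would optimize the right-hand side over $t>0$. If $\|\eta^2(T)\| = 0$, letting $t\to\infty$ forces $\|\eta(T)\| = 0$ and the inequality is trivial. Otherwise the minimum of $\frac{2\|T\|}{t} + \frac{t}{2}\|\eta^2(T)\|$ is attained at $t = 2\sqrt{\|T\|/\|\eta^2(T)\|}$ and equals $2\sqrt{\|T\|\,\|\eta^2(T)\|}$. Squaring gives the desired $\|\eta(T)\|^{2} \leq 4\|T\|\,\|\eta^{2}(T)\|$.

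The only subtlety I anticipate is making sure the elementary Calculus manipulations transfer cleanly to the Banach-space-valued setting; this is routine because $s \mapsto U(s)(X)$ is norm-continuous for any fixed $X \in L(W)$ and the Riemann integrals above converge in the operator norm. The only genuine ingredient is the isometric property $\|U(t)\| = 1$, which is precisely the content of $\eta$ being hermitian as recalled just before the lemma. No sharp constant or delicate numerical-range argument is needed; the factor $4$ falls out of the one-variable optimization.
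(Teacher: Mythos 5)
Your proof is correct and is essentially the argument the paper outsources to Bonsall--Duncan (the paper's own ``proof'' is just the citation to p.\,95 of that book): expand $\exp(it\eta)$ to second order with an integral remainder, use $\|\exp(it\eta)\|=1$, and optimize over $t>0$. The only blemish is a harmless missing factor of $-i$ on the double-integral term in your combined identity (one gets $t\,\eta(T)=-i\bigl(U(t)(T)-T\bigr)-i\int_0^t\!\int_0^s U(u)(\eta^2(T))\,du\,ds$), which disappears once norms are taken.
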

\begin{proof} This is proved in \cite[p.\,95]{bd}.
\end{proof}

Given a real Banach space  $V$,
 one can equip its complexification
$V_c = V \otimes \mathbb{C}=V\oplus iV$ with a norm $\|\cdot\|_c$ so that $(V_c,
\|\cdot\|_c)$ is a complex Banach space and
\begin{enumerate}
\item[\rm(i)] the isometric
embedding $v \in V \mapsto (v,0)\in V\oplus iV$ identifies $V$ as
a real closed subspace of $V_c$;
\item [\rm(ii)] the map $T\in L(V) \mapsto T_c\in L(V_c)$ is isometric,
where $T_c$ is the complexification of $T$ defined by
 $T_c(x+iy) = T(x) + i T(y)$ for $x,y \in V$.
\end{enumerate}
Moreover, if $V$ is an algebra
 satisfying $\|xy\|\leq \|x\|\|y\|$ for all $x,y \in V$, the norm $\|\cdot\|_c$ can be chosen
so that $\|ab\|_c\leq \|a\|_c\|b\|_c$ for all $a,b \in V_c$. In this case, the linear map
\begin{equation}\label{l_a}
a\in V_c\mapsto L_a \in L(V_c)
\end{equation}
is an isometry, where $L_a$ is
the left multiplication. In the sequel, we will make use of this
construction.

In the preceding construction, if the norm of $V$  is an order-unit norm $\|\cdot\|_e$, one can also define
a notion of {\it numerical range} $\textsf{v}(a)$ of an element $a\in V_c$ by
$$\textsf{v}(a)= \{f(a): f\in V_c^* \mbox{ satisfies } \|f\|=1=f(e)\}.$$
If  $V$ is an algebra and the order unit $e$ is an algebra  identity, then an application of  the isometry in (\ref{l_a}) implies
$\textsf{V}(L_a) \subset \textsf{v}(a)$ and therefore $L_a$ is hermitian if $\textsf{v}(a) \subset \mathbb{R}$.

\section{Tube domains over Finsler symmetric cones}\label{mains}

We prove the main theorem  in this section.  Let $\Omega$ be a proper open cone
 in a real Banach space $(V, \|\cdot\|)$. Then it is a real connected Banach manifold modelled on $V$.
Let $(V_c, \|\cdot\|_c)$  be a complexification of $V$.
The domain
$$ D(\Omega):=V \oplus i\Omega = \{v+ i\omega: v\in V, \omega \in \Omega\}\subset V_c = V\oplus iV$$
in $V_c$ is called a {\it tube domain} over $\Omega$.

Let $V \oplus i\Omega$ be biholomorphic to
a bounded domain (this is always the case if $\dim V <\infty$ \cite[Chapter II, Sec.\,5]{kob}).
On $D(\Omega)=V \oplus i\Omega$, the Carath\'eodory distance $\rho$ is defined, in terms of the
Poincar\'e distance $\rho_\mathbb{D}$ on $\mathbb{D}$, by
$$\rho(z,w) := \sup\{ \rho_{\mathbb{D}}(f(z), f(w)): f\in H(D(\Omega), \mathbb{D})\} \qquad(z,w\in D(\Omega) )$$
which need not coincide with the integrated distance of  the
Carath\'eodory differential metric $\mathcal{C}$ on $V \oplus i\Omega$,  defined in Example \ref{cara}.

If the proper open cone $\Omega$ in $V$ is  normal,
then the order-unit norms induced by elements in $\Omega$ are all equivalent to $\|\cdot\|$ by Lemma \ref{norm}
and one can define a compatible tangent norm  $\tau$ on $\Omega$ by
\begin{equation}\label{tau}
\tau(p,v) = \|v\|_p \qquad ((p,v) \in \Omega \times V)
\end{equation}
where $\|\cdot\|_p$ denotes the order-unit norm induced by the order unit $p\in \Omega$. To see that $\tau$ is continuous,
let $(p_n)$ converge to $p$ in $\Omega$ and $(v_n)$ converge to $v$ in $V$. Given $1>\ep >0$,
 $\|p_n-p\|_p \rightarrow 0$ implies $-\ep p \leq p_n-p\leq \ep p$ and $(1-\ep )p \leq p_n \leq (1+\ep)p$ from some $n$ onwards,
 which gives
 $$-(1+\ep)\|v_n\|_{p_n} p \leq - \|v_n\|_{p_n}p_n \leq v_n \leq \|v_n\|_{p_n}p_n \leq (1+\ep) \|v_n\|_{p_n}p$$
 and hence $\|v_n\|_p \leq (1+\ep)\|v_n\|_{p_n}$. Likewise $p \leq \frac{p_n}{1-\ep}$ implies $\|v_n\|_{p_n} \leq \frac{\|v_n\|_p}{1-\ep}$
 and therefore
 $$1-\ep \leq \frac{\|v_n\|_p}{\|v_n\|_{p_n}} \leq 1+ \ep.$$
 Since $\|v_n\|_p \rightarrow \|v\|_p$ as $n \rightarrow \infty$,
 we conclude $\|v_n\|_{p_n} \rightarrow \|v\|_p$, proving continuity of $\tau$.
 The above argument also implies that for each   $a\in \mathcal{U}:= \{v\in V:\|v-p\|_p < \varepsilon <1\}$,
 we have $\|v\|_p/(1+\varepsilon) \leq \|v\|_a \leq \|v\|_p/(1-\varepsilon)$ for all $v\in V$.
 Hence $\tau$ is a compatible tangent norm.

The tangent norm $\tau$ coincides with the tangent norm $b: T\Omega \longrightarrow [0,\infty)$  in \cite[12.31, 22.37]{up}, which is
defined as follows. Fix $e\in \Omega$. Then each $g\in G(\Omega)$ satisfying $g(e)=e$ is an isometry with respect to the
order unit norm $\|\cdot\|_e$  and hence one can define
\begin{equation}\label{b}
b(p,v) = \|h(v)\|_e \qquad ( (p,v) \in T\Omega)
\end{equation}
for any $h\in G(\Omega)$ satisfying $h(p)=e$. In fact, $\tau$ is $G(\Omega)$-invariant, which implies $\tau=b$. For if
$h\in G(\Omega)$, then we have
$$\tau(h(p), h'(p)(v)) = \tau(h(p), h(v)) = \|h(v)\|_{h(p)} = \|v\|_p = \tau(v,p) \qquad(v\in T_p\Omega =V)$$
where the third identity follows from the equivalent conditions
$$-\lambda h(p) \leq h(v) \leq \lambda h(p) \Leftrightarrow\lambda p \leq v \leq \lambda p \qquad (\lambda >0).$$

 By \cite[Lemma 1.3, Theorem 1.1]{nus}, the integrated distance $d_\tau$ of $\tau$ on $\Omega$ coincides with
 Thompson's metric
\[ d_\tau (x,y) =\max\{ \log M(x/y),\, \log M(y/x)\} \qquad (x,y \in \Omega)\]
where
\[M(a/b):= \inf\{\beta>0:  \beta a \geq b \} \qquad (a, b\in \Omega).
\]
It has been shown in \cite[(5.3);\,Theorem II]{v} that the restriction of the Carath\'eodory distance $\rho$
to $i\Omega$ can be expressed as
$$\rho(ix,iy) = \sup \left\{\log \left|\frac{f(x)}{f(y}\right|: f\in V^*, f(\Omega) \subset (0,\infty)\right\} \quad (x, y \in \Omega).$$
From this one can deduce that $d_\tau(x,y) = \rho(ix,iy)$, as shown in \cite[Lemma 3.6.17]{chu1}.

\begin{exa}\label{exa} Let $\mathcal{A}$ be a JB-algebra with identity $e$, partially ordered by the
closed cone $\mathcal{A}_+ = \{a^2:a\in \mathcal{A}\}$. Let $\Omega$ be the interior of  $\mathcal{A}_+$.
 Then $e\in \Omega$ is an order unit and
the norm of $\mathcal{A}$ coincides with the order-unit norm $\|\cdot\|_e$.
Hence $\Omega$ is a normal cone. Equip $\Omega$  with the tangent norm $\tau$ defined in (\ref{tau}).
Each element $a\in \Omega$ is
invertible  and one can define a smooth map
$\mu: \Omega \times \Omega\longrightarrow \Omega$ in terms of the Jordan triple product
by $$\mu(x,y) = \{x, y^{-1}, x\} \qquad (x,y \in \Omega).$$
It can be shown that $(\Omega, \mu)$ is a Loos symmetric space (e.g.\,\cite{ll}) and moreover,
each $\tau$-isometry is a $\mu$-homomorphism.
By Lemma \ref{f=g}, a $\tau$-symmetry
$s_p: \Omega \longrightarrow \Omega$ at $p\in \Omega$
must be unique since
$s_p'(p) = - id : T_p\Omega \longrightarrow T_p \Omega$.
\end{exa}

Finally, we are  ready to prove the main result.

\begin{theorem}\label{main} Let $\Omega$ be a proper open cone in a real Banach space $V$, with closure $\overline\Omega$.
The following conditions are equivalent.
\begin{enumerate}[\upshape (i)]
\item The Siegel domain $V \oplus i\Omega$ is biholomorphic to a bounded symmetric domain.
\item $\Omega$ is a normal linearly homogeneous Finsler symmetric cone.
\item $V$ is a unital JB-algebra in an equivalent norm and $\overline\Omega = \{a^2: a\in V\}.$
\end{enumerate}
\end{theorem}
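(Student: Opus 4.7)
The plan is to run the cycle (iii)$\Rightarrow$(i)$\Rightarrow$(iii)$\Rightarrow$(ii)$\Rightarrow$(iii), where (iii)$\Rightarrow$(i) is the Braun--Kaup--Upmeier theorem \cite{bku,ku} already cited in the introduction, and (iii)$\Rightarrow$(ii) is essentially contained in Example~\ref{exa}: a unital JB-algebra supplies the Loos symmetric map $\mu(x,y)=\{x,y^{-1},x\}$ on the interior of $\{a^2:a\in V\}$, the quadratic operators $U_a$ lie in $G(\Omega)$ and act transitively via $U_{a^{1/2}}(e)=a$, uniqueness of the symmetry at each point is guaranteed by Lemma~\ref{f=g}, and normality is immediate because the JB-norm coincides with the order-unit norm. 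For (i)$\Rightarrow$(iii) I would apply \cite{bku,ku} to put a JB-algebra structure on $V$; what has to be checked is that the JB-norm is equivalent to the given Banach space norm, and by Lemma~\ref{norm} this reduces to showing $\Omega$ is normal, which follows by transporting the Carath\'eodory tangent norm on the bounded symmetric realisation (which is pinched between multiples of any chart norm) back to $\Omega$ in combination with (\ref{c}).

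The crux is (ii)$\Rightarrow$(iii). Fix $e\in \Omega$ and, by Lemma~\ref{norm}, replace $\|\cdot\|$ by the equivalent order-unit norm $\|\cdot\|_e$, so that the canonical tangent norm in (\ref{b}) represents $\tau$. The unique $\tau$-symmetry $s_e$ is a $\mu$-automorphism of the Loos structure $(\Omega,\mu)$ inherited from Example~\ref{loos}, fixes $e$, and satisfies $s_e'(e)=-I$; linearising and invoking Lemma~\ref{f=g} forces $s_e$ to coincide with its differential at $e$, hence $s_e\in G(\Omega)$. The involution $\sigma=\mathrm{Ad}(s_e)$ on the Banach--Lie algebra $\mathfrak g(\Omega)$ produces a decomposition $\mathfrak g(\Omega)=\mathfrak k\oplus \mathfrak p$ into $\pm 1$-eigenspaces, and linear homogeneity makes the evaluation $X\in \mathfrak p\mapsto Xe\in V$ a linear isomorphism. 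I transfer this to a commutative bilinear product on $V$ with identity $e$ and with all left multiplications $L_x$ lying in $\mathfrak p$; since $\mathfrak k$ preserves the product by construction, each commutator $[L_x,L_y]\in \mathfrak k$ acts as a derivation, and Lemma~\ref{244} then delivers the identity $[L_a,L_{a^2}]=0$ via Albert's argument, i.e.\ the Jordan identity.

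It remains to upgrade this Jordan algebra to a JB-algebra and to identify $\overline\Omega$ with $\{v^2:v\in V\}$. Complexifying and using the isometric embedding (\ref{l_a}) of $V_c$ in $L(V_c)$, the $G(\Omega)$-invariance of $\tau$ together with the state-space description of $\|\cdot\|_e$ shows that $iL_a$ has purely imaginary numerical range for each $a\in V$, so that $L_a$ is hermitian; Lemma~\ref{95} then yields the JB-norm relations $\|a^2\|_e=\|a\|_e^2$ and $\|a^2\|_e\le \|a^2+b^2\|_e$ almost automatically. The description of $\overline\Omega$ as the set of Jordan squares is obtained from Lemma~\ref{1} by matching the state space $S_e$ of $\Omega$ with that of the resulting JB-algebra. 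The principal obstacle is the step from geometry to algebra: identifying $\mathfrak p$ with $V$ in a way that produces a Jordan-compatible product and verifying the Jordan identity without any preexisting multiplicative structure on $V$. The combined use of Loos rigidity (Lemma~\ref{f=g}), the $G(\Omega)$-invariance of the Finsler structure, and the derivation identities of Lemma~\ref{244} is essential here, after which Lemma~\ref{95} handles the remaining analytic JB-axioms almost formally.
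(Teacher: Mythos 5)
Your overall strategy for (ii)$\Rightarrow$(iii) --- an eigenspace decomposition under the involution induced by $s_e$, transfer of $\frak p$ to a commutative product on $V$ via evaluation at $e$, the derivation Lemma \ref{244}, and the hermitian-operator Lemma \ref{95} --- is the same as the paper's, but two of your steps are genuinely broken. First, the claim that Lemma \ref{f=g} forces $s_e$ to coincide with its differential $-I$ at $e$, hence $s_e\in G(\Omega)$, is false: since $\Omega$ is a proper cone, $-I$ does not map $\Omega$ into $\Omega$, so there is no linear $\mu$-automorphism with derivative $-I$ at $e$, and indeed in the model case the symmetry at $e$ is the (nonlinear) Jordan inverse $x\mapsto x^{-1}$. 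Because $s_e\notin GL(V)$, the involution $\mathrm{Ad}(s_e)$ does not act on $\frak g(\Omega)\subset L(V)$; it acts on vector fields. This is precisely why the paper decomposes the larger Lie algebra $\mathrm{Kill}\,\Omega$ of $\mathrm{Aut}\,\Omega$ (via Klotz's theorem) rather than $\frak g(\Omega)$, and must then \emph{prove} that each $X\in\frak p$ is a linear vector field, by writing $X=Z-\theta Z$ with $Z\in\frak g(\Omega)$ and showing $[I,\theta Z]=\theta[\theta I,Z]=0$ after first establishing $\theta I=-I$. Your proposal has no substitute for this step, and without it the product $xy=L(x)(y)$ is not even defined as an element of $L(V)$.

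Second, Lemma \ref{244} does not ``deliver $[L_a,L_{a^2}]=0$ via Albert's argument'': it only gives the vanishing of the iterated commutator $[[L_a,L_{a^2}],[[L_a,L_{a^2}],L_{a^2}]]$. Descending from this to the Jordan identity is the bulk of the paper's proof and requires, in order: that $i[L_a,L_{a^2}]_c$ is hermitian (because $\exp t[L_a,L_{a^2}]$ fixes $e$ and is a positive isometry), an application of Lemma \ref{95} to obtain $[[L_a,L_{a^2}],L_{a^2}]=0$, the trick $\exp(tTL(a))(e)=e+tTL(a)(e)$ forcing $[L_a,L_{a^2}](a)=0$ and hence power associativity, the pure-state multiplicativity argument on $J(a_0,e)$ to get $\{x^2:x\in V\}\subset\overline\Omega$ (and, as a byproduct, $\|x^2\|_e=\|x\|_e^2$ --- this comes from states, not from Lemma \ref{95}), and only then the hermiticity of $L_{a^2}$ (which needs $a^2\geq 0$) followed by a second application of Lemma \ref{95} to kill $[L_{a^2},L_a]$. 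Your sketch also omits the geodesic-completeness argument giving $\Omega=\{\exp X(e):X\in\frak p\}\subset\{x^2:x\in V\}$. These are not routine verifications; they constitute the core difficulty of the implication.
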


\begin{proof} (i) $\Leftrightarrow$ (iii). This has been proved in \cite{bku, ku}.

(iii) $\Rightarrow$ (ii). This is essentially proved in \cite{bku,ku},  more
details can be found in \cite[22.37]{up}.
It suffices to highlight the main arguments.
 First, $\Omega$ is a normal cone as noted in Example \ref{exa}.
Let $e\in V$ be the algebra identity. Then  $e\in \Omega$ and each element
in $\Omega$ is invertible. The linear automorphism group
$G(\Omega)$ acts transitively on $\Omega$ and the tangent norm  $b:T\Omega \longrightarrow [0,\infty)$
defined in (\ref{b}) is $G(\Omega)$-invariant. Equipped with  this tangent
norm,  the inverse map $x\in \Omega \mapsto x^{-1}\in \Omega$ is a $b$-symmetry at $e$, which is
unique, as noted in Example \ref{exa}, and  hence $\Omega$ is a symmetric Banach manifold
by linear homogeneity.

(ii) $\Rightarrow$ (iiii).  Let $\Omega$ be a normal linearly homogeneous Finsler symmetric cone in a compatible
$G(\Omega)$-invariant tangent norm $\nu$.
 For each $p\in \Omega$, let
$s_p : \Omega \longrightarrow \Omega$ be the symmetry at $p$. By Example \ref{loos}, $(\Omega, \mu)$ is a Loos
symmetric space, with the smooth map
$$\mu: (x,y) \in \Omega \times \Omega \mapsto x \cdot y = s_x(y) \in \Omega.$$

Denote by Diff$(\Omega)$ the diffeomorphism group
of $\Omega$ and let
$${\rm Aut}\,\Omega = \{f\in {\rm Diff}(\Omega): f\circ s_p = s_{f(p)}\circ f, \forall p\in \Omega\}$$
be the subgroup of Diff$(\Omega)$, consisting of  $\mu$-automorphisms of $\Omega$.

By \cite[Theorem 2.4, Theorem 5.12]{klotz},
Aut\,$\Omega$ carries the structure of a real Banach Lie group, with Lie algebra
\begin{equation}\label{kill}
{\rm Kill}\,\Omega =\{X\in\mathcal{V}(\Omega): \exp tX \in {\rm Aut}\,\Omega, \forall t \in \mathbb{R}\}
\end{equation}
which is a Banach Lie algebra in some norm $|\cdot|$ and a subalgebra of the
Lie algebra $\mathcal{V}(\Omega)$ of smooth vector fields on $\Omega$.

We note that
 the linear automorphism group $G(\Omega)$  is contained in Aut\,$\Omega$.
Indeed, given $p\in \Omega$ and $g \in G(\Omega)$, the composite map
$$g^{-1}\circ s_{g(p)}\circ g : \Omega \longrightarrow \Omega$$ is
 a $\nu$-isometry by $G(\Omega)$-invariance of $\nu$, with isolated fixed-point $p$. Hence by uniqueness of the
symmetry $s_p$, we have  $g^{-1}\circ s_{g(p)}\circ g = s_p$ and $g \in {\rm Aut}\, \Omega$.
 It follows that $\frak g(\Omega) \subset
{\rm Kill}\,\Omega$ by (\ref{go}) and (\ref{kill}).

Fix a point $e\in \Omega$, which induces an order-unit norm $\|\cdot\|_e$ on $V$, equivalent to the
norm $\|\cdot\|$ of $V$, by Lemma \ref{norm}.

The  evaluation map
$$X\in {\rm Kill}\,\Omega \mapsto X(e) \in V$$
is surjective by \cite[Proposition 5.9]{ber} (cf.\,\cite[Theorem II.2.2]{loos}).
In fact, the differential of the orbital map $\rho: g\in G(\Omega) \mapsto g(e) \in \Omega$
 at the identity of $G(\Omega)$ is the evaluation map
\begin{equation}\label{onto}
X\in \frak g(\Omega) \mapsto X(e) \in T_e \Omega =V
\end{equation}
which is also surjective by linear homogeneity of $\Omega$ \cite{chu} (cf.\,\cite[p.\,110]{w}).

Let $s_e: \Omega \longrightarrow \Omega$ be the symmetry at $e$. Then $s_e \in {\rm Aut}\, \Omega$.
Since $s_e^2$ is the identity map, the adjoint representation
$$\theta = Ad(s_e): {\rm Kill}\,\Omega \longrightarrow {\rm Kill}\,\Omega$$
is an involution and the Lie algebra ${\rm Kill}\,\Omega$ has an eigenspace decomposition
\begin{equation*}
{\rm Kill}\,\Omega = \frak k \oplus \frak p
\end{equation*}
with
\begin{equation}\label{[kp]}
[\frak k, \frak k] \subset \frak k, \quad [\frak k, \frak p] \subset \frak p, \quad [\frak p, \frak p] \subset \frak k
\end{equation}
where $\frak k$ is the $1$-eigenspace and $\frak p$ the $(-1)$-eigenspace, both are
$|\cdot|$-closed. Moreover, we have as usual (e.g. \cite[Lemma 2.4.5]{book})
$$\frak k=\{X\in {\rm Kill}\,\Omega: X(e)=0\} = \{X\in {\rm Kill}\,\Omega:  \exp tX (e) =e, \forall t\in \mathbb{R}\}.$$
Hence the linear map
\begin{equation}\label{map}
X\in \frak p \mapsto X(e) \in V
\end{equation}
 is bijective as $\frak k \cap \frak p=\{0\}$.

Let  $I\in L(V)$ be the identity vector field, which belongs to ${\rm Kill}\, \Omega$ since
 $\exp t I = \varepsilon^t I \in G(\Omega)$ for all $t\in \mathbb{R}$, where $\varepsilon = \log^{-1}(1)$
 denotes Euler's number, to avoid confusion with the order unit $e\in \Omega$.
Hence  $ [I,X] \in {\rm Kill}\, \Omega$ for all $X \in {\rm Kill}\,\Omega$.
We show  $\theta I = -I$.

We have
 $$(\theta I)(\cdot) =  \displaystyle \left.\frac{d}{dt} \right|_{t=0}
   \exp t\theta I (\cdot)=  \displaystyle \left.\frac{d}{dt} \right|_{t=0} s_e (\exp tI)s_e (\cdot) =
   \displaystyle \left.\frac{d}{dt} \right|_{t=0} s_e (\varepsilon^ts_e )(\cdot).$$
Since the symmetry $s_e$ reverses the geodesic $\gamma(t) = \exp tI(e) = \varepsilon^te$,
we have $s_e(\varepsilon^te) = s_e(\gamma(t)) = \gamma(-t) = \varepsilon^{-t}e$. By uniqueness of the symmetry,
we have $\varepsilon^{t}s_e(\varepsilon^{t}\cdot) = s_e(\cdot)$, which gives
 $$(\theta I)(\cdot) =  \displaystyle \left.\frac{d}{dt} \right|_{t=0}\varepsilon^{-t}I(\cdot)= -I(\cdot).$$

We show next that each $X= f \frac{\partial}{\partial x} \in \frak p$ is a linear vector field. For this, we first note that
  $X = Z - \theta Z$ for some $Z\in \frak g(\Omega) \subset {\rm Kill}\, \Omega$. Indeed, (\ref{onto}) implies
  the existence of $Y\in \frak g(\Omega)$ such that $Y(e) = X(e)$, which gives
$$X(e)= Y(e)= \frac{1}{2}(Y + \theta Y)(e) +  \frac{1}{2}(Y - \theta Y)(e) =  \frac{1}{2}(Y - \theta Y)(e)$$
since $ \frac{1}{2}(Y + \theta Y) \in\frak k$. It follows that $X=  \frac{1}{2}(Y - \theta Y) \in \frak p$, where
$Z= \frac{1}{2} Y\in \frak g(\Omega)$. Since $Z$ is a linear vector field by (\ref{go}), linearity of
$X=Z -\theta Z$ follows from that of $\theta Z$. By the remarks in Section \ref{fins},
the latter is linear because
$$[I, \theta Z] = \theta [\theta I, Z] =  -\theta [I,Z]=0.$$

The linear isomorphism $X\in \frak p \mapsto X(e) \in V$ in (\ref{map})
 is a continuous bijection and hence by the open mapping
theorem, its inverse is also continuous and there is a constant $\kappa>0$ such that
 $$\kappa\|X(e)\| \geq |X|$$ for all $X\in \frak p$.
 Let $L: V \longrightarrow \frak p$ be the inverse of the map in (\ref{map}) so that
$$L(x)(e) = x \qquad (x \in V)$$
and $|L(a)| \leq \kappa\|a\|$ for all $a\in V$.

On $V$, we can now define a product
\begin{equation}\label{jproduct}
xy := L(x)(y) \qquad (x,y \in V)
\end{equation}
where $L(x)$ is a  linear vector field, identified as an element of $L(V)$.

We show that $V$ is a Jordan algebra in this product, with identity $e$.
First, we have
$$ae = L(a)(e) = a \qquad (a\in V).$$
Given $a,b \in V$, we have
$$ ab-ba =  [L(a), L(b)](e) =0$$
where $L(a), L(b) \in \frak p$ implies $[L(a), L(b)] \in \frak k$, by (\ref{[kp]}).

Before deriving the Jordan identity, we need to establish some facts.
By continuity of the evaluation map in (\ref{map}), there is a constant $\rho >0$ such that
 $\|Xe\| \leq \rho|X|$ for all $X\in \frak p$. This implies  $\|a\| = \|L(a)e\| \leq \rho|L(a)|$ and
$$
\|ab\|= \|L(a)L(b)e\| \leq \kappa\|a\|\|L(b)e\| \leq \rho \kappa^2\|a\|\|b\| \qquad (a,b \in V)
$$
as well as
\begin{equation}\label{banach}
\|ab\|_e \leq \alpha \|a\|_e\|b\|_e \qquad (a,b \in V)
\end{equation}
for some $\alpha >0$, since $\|\cdot\|$ and $\|\cdot\|_e$ are equivalent.

We begin by showing that $V$ is power associative. One can verify
directly the identity
$$[[L(x), L(y)], L(z)]( e) = L([L(x),L(y)]z)(e) \qquad (x,y,z \in V)$$
where $[L(x), L(y)] \in \frak k$ implies $[L(x), L(y)](e)=0$.
 It follows that
\begin{equation}\label{30}
[[L(x), L(y)], L(z)]  = L([L(x),L(y)]z)
\end{equation}
since both vector fields belong to $\frak p$. By definition, $L(x)$ is
the left multiplication by $x$ on the commutative algebra $V$.
By (\ref{30}) and  (\ref{derivation}), $[L(x), L(y)]$
is a derivation on  $V$ for all $x,y \in V$. Hence Lemma \ref{244} implies
\begin{equation}\label{powera}
[[L(x), L(x^2)], [[L(x), L(x^2)], L(x^2)]] =0 \qquad (x\in V).
\end{equation}

Let $a\in V$ and consider the linear vector field $T=[L(a), L(a^2)] \in \frak k$,
identified as an element of $L(V)$. Since
$\exp tT : \Omega \longrightarrow \Omega$ satisfies $\exp tT(e)=e$ for all $t\in \mathbb{R}$,
each $\exp tT$ is a positive linear map on $(V, \|\cdot\|_e)$ and $\|\exp tT\|= \|\exp tT(e)\|  =\|e\|=1$.
Let $T_c \in L(V_c)$ be the complexification of $T\in L(V)$, as defined in Section \ref{jordanorder}.
 Then we have
$$\|\exp tT_c\| = \|(\exp tT)_c\| = \|\exp tT\|=1 \qquad (t\in \mathbb{R}).$$
Hence $iT_c$ is a hermitian operator in $L(V_c)$ and it follows from (\ref{powera}) that
$$[ iT_c, [iT_c, L(a^2)_c]] = - [T,[T, L(a^2)]]_c =0.$$
The linear operator
\begin{equation}\label{eta}
\eta: S\in L(V_c) \mapsto [iT_c, S] = iT_cS - S(iT_c) \in L(V_c)
\end{equation}
is hermitian, since both the left  multiplication $S\in L(V_c) \mapsto iT_c S \in L(V_c)$
 and right multiplication $S\in L(V_c)  \mapsto S(iT_c) \in L(V_c)$  are hermitian.
Hence Lemma \ref{95}  implies
\begin{eqnarray*}
\|[iT_c, L(a^2)_c]\|^2 &=& \|\eta(L(a^2)_c)\|^2  \leq  4\|L(a^2)_c\|\|\eta^2(L(a^2)_c)\|\\
&=& 4\|L(a^2)_c\|\|[iT_c, [iT_c, L(a^2)_c]]\|=0
\end{eqnarray*}
which gives
\begin{equation}\label{234}
[[L(a), L(a^2)], L(a^2)] = [T, L(a^2)]=0.
\end{equation}
In particular, we have $$[L(a), L(a^2)](a^2) = [[L(a), L(a^2)], L(a^2)](e) =0$$
 since  $[L(a), L(a^2)](e) =0$.
Further, by Lemma \ref{244}, we have $$ L(a) T=L(a)[L(a), L(a^2)]
 =\frac{1}{3} [L(a), L(a^3)] \in\frak k$$
and hence
$TL(a) = L(a)T - [L(a), T] = L(a)T - [L(a), [L(a), L(a^2)]] \in {\rm Kill}\,\Omega$,
where $TL(a)$ is a linear vector field, identified as an element of $L(V)$.
By (\ref{square2}), we have $L(a)TL(a)(e) = a[L(a), L(a^2)](a)= 0$ and hence
$(TL(a))^2(e) = TL(a)TL(a)(e) =0$ as well as
$$(TL(a))^{n+2}(e) = (TL(a))^n(TL(a))^2(e)=0
\qquad (n=1,2, \ldots).$$ It follows that
\begin{eqnarray*}
&& \exp tTL(a) (e) = e+ tTL(a)(e) + t^2(TL(a))^2 (e)/2! + \cdots = e+tTL(a)(e)
 \end{eqnarray*}
for all $t\in \mathbb{R}$,
where $\exp tTL(a) \in {\rm Aut}\,\Omega$ implies  $e\pm tTL(a)(e) \in \Omega$
for all $t>0$. In other words,
$$- \frac{1}{t} e \leq TL(a)(e) \leq \frac{1}{t} e \qquad (t>0)$$
and therefore
$[L(a), L(a^2)](a) = TL(a)(e) = 0$. By (\ref{square}), we have
$$[L(a), L(a^2)](a^n) =0 \qquad (n=1,2,\ldots).$$
That is,  $a^{n+3}= a^{n+1} a^2  $ for $n =1,2,\ldots$. It follows that
$$[L(a), L(a^m)](a) = a^{m+2} - a^ma^2 = 0 \qquad (m =2, 3, \ldots)$$
and again,  (\ref{square})  implies
$$[L(a), L(a^m)](a^n) =0 \qquad (n, m-1 = 1,2,\ldots)$$
which gives $a^m a^{n+1} = a(a^ma^n)$ for $m,n = 1,2,\ldots$. From this we deduce
$$a^ma^n = a^{m+n} \qquad (m,n=1,2,\ldots)$$
by induction, since  $a^ma^n = a^{m+n}$ implies
 $$a^ma^{n+1} = a(a^ma^n) = aa^{m+n} = a^{m+n+1}.$$
This  proves  power associativity of $V$
and therefore the closed subalgebra $J(a,e)$ of $V$ generated by $e$ and any $a\in V$
is associative.

Since $\Omega$ is geodesically complete and the orbits of the one-parameter groups
$t\in \mathbb{R}\mapsto \exp tX~(X\in \frak p)$ are the geodesics through $e\in \Omega$
(cf.\,\cite[Example 3.9]{neeb}), we must have
$$\Omega = \{ \exp X(e): X\in \frak p\}.$$

It follows that each
$a\in \Omega$ can be written as
$a = \exp X(e)$ for some $X\in \frak p$, where $X$ is a linear vector field,
 identified as an element of $L(V)$. For each $z\in V$, define
 $$Exp\, z  =  e+ z + \frac{z^2}{2!} + \cdots. $$ Then we have
$a=\exp X(e) = e + X(e) + X^2(e)/2! + \cdots  = Exp\, x$, where $x=X(e)\in V$.
 By power associativity, we have
$a = (Exp\, \frac{x}{2})^2$. This proves the first part of the following inclusions
\begin{equation}\label{equal}
\Omega \subset \{x^2: x\in V\} \subset \overline\Omega
\end{equation}

To prove the second inclusion in (\ref{equal}), let $v \in V$. We show $v^2\in \overline\Omega$.
By a remark before (\ref{gen}), there is some $\lambda_0 >0$
 and $a_0\in \Omega$ such that $\lambda_0 v = e-a_0\in  J(a_0,e)$, where
$J(a_0,e)$ is a commutative real Banach algebra in the order-unit norm by (\ref{banach})
(cf.\,\cite{in}).

For each $x \in \Omega \cap J(a_0,e)$, we show $a_0x\in \Omega$. Indeed, given $a_0 = Exp\, z
= \exp Z(e)$ for some $z= Z(e)$ and $Z\in \frak p$,
we have $x\in J(a_0,e) \subset J(z,e)$ and associativity
of $J(z,e)$ implies
\begin{eqnarray*}
a_0x &=& x + z x + \frac{z^2 x}{2!} + \cdots =  x + z x + \frac{z(z x)}{2!} + \cdots \\
&=& x + Z(x) + \frac{Z^2(x)}{2!} + \cdots = \exp Z (x) \in \Omega.
\end{eqnarray*}

Further, for $y\in  \overline{\Omega} \cap J(a_0,e)$,
we show $a_0y\in \overline\Omega$.
The cone   $\Omega \cap J(a_0,e)$ is open in $J(a_0,e)$ and as before,
 we have
\begin{equation}\label{+-+}
J(a_0,e)= \Omega \cap J(a_0,e) - \Omega \cap J(a_0,e)
\end{equation}
and $e \in \Omega \cap J(a_0,e)$  is an order-unit in the  induced ordering of $J(a_0,e)$ with respect to the
the cone $ \overline{\Omega} \cap J(a_0,e)$. Repeating the remark before (\ref{gen}) for the cone
 $\Omega \cap J(a_0,e)$, one can find $\lambda >0$ and $w\in  \Omega \cap J(a_0,e)$ such that
$\lambda y = e-w$, where $w = e-\lambda y \leq e$ and $0< f(w) \leq 1$ for all states  $f$
in the state space $S_e$ defined in (\ref{s}).
The latter implies
$$f\left(e-\left(1-\frac{1}{n}\right)w\right) = 1- \left(1-\frac{1}{n}\right)f(w) >0 \qquad (n=1,2, \ldots)$$
for all $f\in S_e$ and hence $e-(1-1/n)w \in  \Omega \cap J(a_0,e)$ by Lemma \ref{1}. Therefore the preceding
argument yields $a_0(e-(1-1/n)w)
\in  \Omega \cap J(a_0,e)$ and
$$\lambda a_0y= \lim_n a_0(e-(1-1/n)w) \in  \overline{\Omega} \cap J(a_0,e).$$

Let $S_{a_0} = \{\psi\in J(a_0,e)^*: \psi(e)=1, \psi \mbox{ is positive on } J(a_0,e)\}$ be
the state space of $J(a_0,e)$.
Let $\psi\in S_{a_0}$ be a pure state, that is, $\psi$ is an extreme point of $S_{a_0}$.
We show that $\psi(a_0^2) = \psi (a_0)^2$.
Let $b = a_0/2\|a_0\|_e\in \Omega \cap J(a_0,e)$ so that
$\|b\|_e <1$. Then we have $0< \vp(b) <1$ for all $\vp \in S_{a_0}$ and $e-b\in \Omega \cap J(a_0,e)$ by  Lemma \ref{1}.
One can define two states
$\psi_b$ and $\psi_{e-b}$ in $S_a$ by
$$\psi_b(x) = \frac{\psi(bx)}{\psi(b)}, \quad \psi_{e-b}(x) = \frac{\psi((e-b)x)}{1-\psi(b)} \quad {\rm for}\quad x\in J(a_0,e).$$
This gives the convex combination
$$\psi = \psi(b) \psi_b + (1-\psi(b) )\psi_{e-b}$$
and therefore $\psi=\psi_b$, which gives $\psi(bx) = \psi(b)\psi(x)$ for all $x\in J(a_0,e)$ and
in particular $\psi(a_0^2) =\psi (a_0)^2$.

It follows that  $\psi((\lambda_0 v)^2) = \psi ((e-a_0)^2)
= \psi (e-2a_0 +a_0^2) =(1-\psi(a_0))^2 \geq 0$ for each pure state $\psi\in S_{a_0}$,
and hence $\vp(v^2) \geq 0$ for all states $\vp\in S_{a_0}$, by the Krein-Milman theorem.
As each state of $V$ restricts to a state of $J(a_0,e)$, we have shown $f(v^2) \geq 0$ for all states $f$ of $V$ and
hence $v^2\in \overline \Omega$ by (\ref{po}). This proves the second inclusion in (\ref{equal}).

The preceding arguments also reveal that  $\|v\|_e^2 =\|v^2\|_e$ since $\psi(v^2)=\psi(v)^2$ for all
pure states of $J(a_0,e)$ and $\|v\|_e$ is the supremum $\sup\{|\psi(x)|\}$, taken over all
pure states $\psi$ in $S_{a_0}$. Since $v\in V$ was arbitrary, we have shown $\|x^2\|_e =\|x\|_e^2$
for all $x\in V$.

In (\ref{banach}), we now actually have
$$\|xy\|_e \leq \|x\|_e\|y\|_e \qquad (x,y\in V).$$
This follows from the fact that
the map $(x,y)\in V^2 \mapsto f(xy)\in \mathbb{R}$ is a positive
semi-definite symmetric bilinear form, for each state
$f \in S_e$, and hence the Schwarz inequality gives
$$|f(xy)| ^2 \leq f(x^2)f(y^2) \leq \|x^2\|_e\|y^2\|_e = \|x\|_e^2\|y\|_e^2$$
and $\|xy\|_e = \sup\{|f(xy)| : f\in S_e\}\leq \|x\|_e\|y\|_e$.

Let $a\in V$. For all $x, y \in J(a,e)$,  the inequality $0\leq x^2 \leq x^2+y^2$
implies $\|x^2\|_e \leq \|x^2+y^2\|_e$. Therefore
we have shown that $(J(a,e),\|\cdot\|_e)$ is an associative JB-algebra, which can be identified with the
algebra $C(\mathcal{S},\mathbb{R})$ of real continuous functions on a compact Hausdorff space $\mathcal{S}$ \cite[Theorem 3.2.2]{stormer}.
Equipped with the injective tensor norm $\|\cdot\|_{inj}$, the complexification $J(a,e)_c =C(\mathcal{S}, \mathbb{R}) \otimes \mathbb{C}$
identifies with the C*-algebra  $C(\mathcal{S},\mathbb{C})$ of complex continuous functions on $\mathcal{S}$ .

Equip the complexification $V_c = V \otimes \mathbb{C}$ of $(V, \|\cdot\|_e)$
 with the injective tensor norm $\|\cdot\|_{inj}$. Then, for  $a\in V$, the remarks at the end of Section \ref{jordanorder}
 imply that
  the numerical range $\textsf{V}(L_{a^2})$ of the
  left multiplication operator  $L_{a^2}: V_c \longrightarrow V_c$
  is contained in
 $$\textsf{v}(a^2)=  \{f(a^2): f\in V_c^* \mbox{ satisfies } \|f\|=1=f(e)\}$$
 where each $f$ restricts to a state of the C*-algebra $J(a,e)_c = C(\mathcal{S},\mathbb{C})$.
Since $a^2 \in J(a,e) \cap \overline\Omega \subset C(\mathcal{S},\mathbb{R})$,
 we have $f(a^2) \geq 0$ and in particular, $\textsf{V}(L_{a^2}) \subset \textsf{v}(a^2)
 \subset  \mathbb{R}$.   Hence
the operator $L_{a^2}$ is hermitian  in $L(V_c)$ and as in (\ref{eta}), the linear operator
\begin{equation}\label{etaa}
 S\in L(V_c) \mapsto [L_{a^2}, S] = L_{a^2}S - SL_{a^2} \in L(V_c)
\end{equation}
is hermitian.

We are now equipped to prove the Jordan identity. Indeed,  we have
$$[L_{a^2}, [L_{a^2}, L_a]]=0.$$
by (\ref{234}) and as before, applying
Lemma \ref{95} to the hermitian operator  in (\ref{etaa}) yields
$$ \|[L_{a^2}, L_a]\|^2 \leq
 4\|L_a\|\|[L_{a^2}, [L_{a^2}, L_a]]\|=0 $$
and therefore  $ [L_{a^2}, L_a]=0$, proving the Jordan identity in $V$.

It remains to show that $(V, \|\cdot\|_e)$ is a JB-algebra and $\overline\Omega =
\{x^2: x\in V\}$.
To show the former, it suffices to prove
$$-e\leq a \leq e \Rightarrow 0\leq a^2 \leq e \qquad (a\in V)$$
by \cite[Proposition 3.1.6]{stormer}. Let $-e\leq a\leq e$. We have already shown
$a^2 \in \overline \Omega$. Since $e\pm a \in  \overline\Omega \cap J(a,e)$ and all pure states
 of $J(a,e) \approx C(\mathcal{S}, \mathbb{R})$ are multiplicative,  we have
$$\psi(e- a^2) = \psi((e+a)(e-a)) = \psi(e+a)\psi(e-a)\geq 0$$
for all pure states $\psi $ of $J(a,e)$,
which implies $\vp(e-a^2) \geq 0$ for all states $\vp$ of $J(a,e)$, by the Krein-Milman theorem. Hence
$e-a^2 \in \overline \Omega$ since each state of $V$ restricts to a state of $J(a,e)$.
This proves that $(V,\|\cdot\|_e)$ is a JB-algebra.
It follows that
$\{x^2: x\in V\}$ is closed and coincides with $\overline\Omega$, by (\ref{equal}).
\end{proof}

\begin{xrem}
The proof of Theorem \ref{main} reveals that  condition (iii) in the theorem is equivalent to
$\Omega$ being a normal linearly homogeneous Finsler symmetric cone in the tangent norm $\tau$ defined
in (\ref{tau}). However, (iii)  can also be equivalent to $\Omega$ being a normal linearly homogeneous
Finsler symmetric cone in another $G(\Omega)$-invariant tangent norm. For instance,
the other tangent norm can be the Riemannian metric given in  Example \ref{g} below.
\end{xrem}

\begin{exa}\label{spin}
Let $H$ be a real Hilbert space with norm $\|\cdot\|$ and inner product
$\langle\cdot,\cdot\rangle$.
The Hilbert space direct sum
$H\oplus\mathbb{R}$ , with inner product  $\ll \cdot,\cdot\gg$,
is a JH-algebra with identity $e=0\oplus 1$ and the Jordan product
$$(a\oplus \alpha)(b\oplus \beta) := (\beta a +\alpha b) \oplus
(\langle a, b\rangle +\alpha \beta).$$
We have
$\{x^2: x\in H \oplus \mathbb{R}\} = \{ a \oplus \alpha: \alpha \geq \|a\|\}$. Its interior $\Omega $
is linearly homogeneous \cite[Lemma 2.3.17]{book} and  a Riemannian symmetric space in the metric
$$g_p(u,v) = \, \ll\{p^{-1}, u, p^{-1}\}, v\gg \qquad (p\in \Omega, u, v \in H \oplus \mathbb{R})$$
\cite[Theorem 2.3.19]{book}
where $\{p^{-1}, u, p^{-1}\}$ denotes the Jordan triple product.

One can define an equivalent norm $\|\cdot\|_s$ on $H \oplus \mathbb{R}$
by
$$\|a\oplus \alpha\|_s = \|a\| + |\alpha|.$$ When $H \oplus \mathbb{R}$ is equipped with this norm,
it becomes a JB-algebra and is called a {\it spin factor}, where $\|\cdot\|_s$ is the order-unit norm
induced by $e$.  In this setting, $\Omega$ is a linearly homogeneous
Finsler symmetric cone with the tangent norm $\tau$ in (\ref{tau}), which differs from $g$.  We have
$$\tau(e, a\oplus \alpha) =\|a\oplus \alpha\|_e = \|a\oplus \alpha\|_s = \|a\| + |\alpha|$$  whereas
 $g_e( a\oplus \alpha, a\oplus \alpha)^{1/2} = \sqrt{\|a\|^2 +|\alpha|^2}$.
\end{exa}

The class of JB-algebras include the unital JH-algebras. Indeed,
unital JH-algebras have been classified in \cite[Section 3]{chu1}, they are of the form
\begin{equation}\label{dsum}
A_1 \oplus \cdots \oplus A_n \qquad (n\in \mathbb{N})
\end{equation}
where each summand $A_j$ is either a finite dimensional unital JH-algebra or
of the form
$H \oplus \mathbb{R}$,
and the direct sum in (\ref{dsum}) is equipped with coordinatewise Jordan product and the $\ell_2$-norm
$$\|a_1 \oplus \cdots \oplus a_n\|_2 := (\|a_1\|^2+ \cdots + \|a_n\|^2)^{1/2}.$$
When the direct sum is equipped
with the $\ell_\infty$-norm
$$\|a_1 \oplus \cdots \oplus a_n\|_\infty := \sup\{\|a_1\|, \ldots, \|a_n\|\},$$ it becomes a JB-algebra.
Finite dimensional unital JH-algebras are exactly the class of finite dimensional formally real
Jordan algebras, which have been classified in \cite{jvw}.

\begin{corollary}\label{fin} Let $\Omega$ be a proper open cone in a real Hilbert space $V$, with closure $\overline\Omega$.
The following conditions are equivalent.
\begin{enumerate}
\item[\rm(i)] $\Omega$ is a normal linearly homogeneous Finsler symmetric cone.
\item[\rm (ii)] $\Omega$ is a linearly homogeneous self-dual cone.
\item[\rm (iii)] $V$ is a unital JH-algebra in an equivalent norm and $\overline\Omega = \{a^2: a\in V\}.$
\end{enumerate}
\end{corollary}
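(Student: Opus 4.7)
My plan is to reduce everything to the Main Theorem (Theorem~\ref{main}) together with the Hilbert-space characterisation of self-dual cones from \cite{chu} that is recalled in the introduction, and to establish the cycle (i)$\Rightarrow$(ii)$\Rightarrow$(iii)$\Rightarrow$(i). The point is that both external ingredients are already available off the shelf, so the corollary is essentially a bookkeeping exercise once the right route through the three conditions is chosen.

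For (i)$\Rightarrow$(ii), I will first apply Theorem~\ref{main} to conclude that the tube domain $V\oplus i\Omega$ is biholomorphic to a bounded symmetric domain. I will then invoke the statement recorded in the introduction, which combines \cite{bku} with the Hilbert-space result of \cite{chu}, asserting that over a Hilbert space such biholomorphism is equivalent to $\Omega$ being linearly homogeneous and self-dual. This gives (ii) at once.

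For (ii)$\Rightarrow$(iii), I will apply the result of \cite{chu} directly: a linearly homogeneous self-dual cone in a real Hilbert space $V$ endows $V$, in its \emph{given} inner product, with a unital JH-algebra structure satisfying $\overline\Omega=\{a^2:a\in V\}$. Since the given inner product is trivially an equivalent inner product, this yields (iii). For (iii)$\Rightarrow$(i), I will use the observation recorded in the paragraph preceding the corollary: every unital JH-algebra becomes a unital JB-algebra when equipped with its $\ell_\infty$ (order-unit) norm, which is equivalent to the $\ell_2$ JH inner-product norm and hence to the original Hilbert norm of $V$. Thus condition (iii) of Theorem~\ref{main} is satisfied, and Theorem~\ref{main} delivers (i).

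The only step that would require genuine work if attempted head-on is (iii)$\Rightarrow$(ii), because self-duality of a cone is sensitive to the choice of inner product, while (iii) only produces \emph{some} equivalent inner product under which $V$ is a JH-algebra. Routing through (i) as above sidesteps any need to compare the JH inner product to the ambient Hilbert inner product, which is what makes the proof short.
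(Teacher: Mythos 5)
Your cycle (i)$\Rightarrow$(ii)$\Rightarrow$(iii)$\Rightarrow$(i) is arranged differently from the paper, which proves (ii)$\Rightarrow$(iii) via \cite{chu}, (iii)$\Rightarrow$(ii) via \cite[Lemma 2.3.17]{book}, (iii)$\Rightarrow$(i) via Theorem \ref{main}, and (i)$\Rightarrow$(iii) via Theorem \ref{main} combined with the structure theory of reflexive JB-algebras. Your (ii)$\Rightarrow$(iii) and (iii)$\Rightarrow$(i) match the paper's. The soft spot is (i)$\Rightarrow$(ii): the ``statement recorded in the introduction'' is an informal summary, and the direction you need --- biholomorphism to a bounded symmetric domain implies self-duality --- does not follow from \cite{bku} and \cite{chu} alone. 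From (i), Theorem \ref{main} gives that $V$ is a unital JB-algebra in an equivalent \emph{norm}; to feed this into \cite{chu} and extract self-duality you must first know that this JB-algebra is a JH-algebra in an equivalent \emph{inner product}. That is a genuine structural fact: it holds because $V$, being a Hilbert space, makes the JB-algebra reflexive, whence \cite[Corollary 3.3.6]{chu1} writes $V$ as a finite $\ell_\infty$-sum of finite-dimensional formally real Jordan algebras and spin factors, hence a unital JH-algebra in an equivalent norm. This is exactly the ingredient the paper supplies to prove (i)$\Rightarrow$(iii), so your appeal to the introduction quietly presupposes the step that carries the content of the corollary and is close to circular. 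The repair is to prove (i)$\Rightarrow$(iii) directly as above and then obtain (ii) from (iii).

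Relatedly, your closing claim that routing through (i) ``sidesteps'' the inner-product sensitivity of self-duality buys nothing. Whichever route is taken, condition (ii) is obtained only for \emph{some} equivalent inner product (the JH one), never automatically for the ambient one: a skewed simplicial cone in $\mathbb{R}^2$ is linearly homogeneous and proper, its tube domain is biholomorphic to the bidisc, yet it is not self-dual in the Euclidean inner product. The introduction's assertion must be read with the same caveat, and the paper itself handles (iii)$\Rightarrow$(ii) by a one-line citation of \cite[Lemma 2.3.17]{book} without attempting to compare inner products. So the difficulty you flag in (iii)$\Rightarrow$(ii) is equally present in your (i)$\Rightarrow$(ii), merely hidden inside the citation.
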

\begin{proof}
(ii) $\Rightarrow$ (iii). This has been proved in \cite{chu}. In fact, condition (ii) entails
a decomposition  $\frak g(\Omega) = \frak k_1 \oplus \frak p_1$ and the evaluation
map $X\in \frak p_1 \mapsto X(e)\in V$ induces an algebra product in $V$, as in (\ref{jproduct}).
 One can use the argument
in the proof of  Theorem \ref{main} to derive the Jordan identity in place of the one given in \cite{chu}.

(iii) $\Rightarrow$ (ii). This has been proved in \cite[Lemma 2.3.17]{book}.

(iii) $\Rightarrow$ (i). This follows from Theorem \ref{main} since $V$ is a unital JB-algebra
in an equivalent norm by the preceding remark.

(i) $\Rightarrow$ (iii). By Theorem \ref{main}, $V$ is a unital JB-algebra  in an equivalent norm
and $\overline\Omega = \{a^2: a\in V\}.$
Since $V$ is a Hilbert space, it is a reflexive JB-algebra and by \cite[Corollary 3.3.6]{chu1},
$V$ is an $\ell_\infty$-sum of a finite number of finite dimensional formally real
Jordan algebras or spin factors, or both. Hence $V$
is a unital JH-algebra in an equivalent norm.
\end{proof}

\begin{xrem} It follows from the preceding corollary that one can view linearly homogeneous
Finsler symmetric cones as a generalisation
of linearly homogeneous self-dual cones to the setting of Banach spaces.
\end{xrem}

\begin{exa}\label{g}
A proper open cone $\Omega$ in a finite dimensional Euclidean space $\mathbb{R}^n$,
with inner product $\langle\cdot,\cdot\rangle$ and Euclidean measure $dy$, can be equipped with a
canonical $G(\Omega)$-invariant Riemannian metric \cite{vin}
\begin{equation}\label{conemetric}
g =  \frac{\partial ^2 \log \vp}{\partial x^i \partial x^j} dx^i dx^j
\end{equation}
where $\vp$ is the characteristic function of $\Omega$ defined by
$$\vp(x) = \int_{\Omega^*} \exp -\langle x,y\rangle dy \qquad (x\in \Omega).$$
The tangent norm $\nu$ defined by $g$  is not the same as
$\tau$ in (\ref{tau}).
 It has been shown in \cite{tsuji} and \cite{shima} that a linearly homogeneous cone $\Omega$ in $\mathbb{R}^n$  is
self-dual if $(\Omega,g)$ is a symmetric space. We see that  (i) $\Rightarrow$ (ii) in Corollary \ref{fin} provides an alternative proof
of this result, as well as extends it to infinite dimension.
\end{exa}

\bigskip
\footnotesize
\noindent\textit{Acknowledgment.}
This research was partly supported by EPSRC (UK) (grant no. EP/R044228/1).

\end{document}